\newtheorem{theorem}{Theorem}[section]
\newtheorem{lemma}[theorem]{Lemma}
\newtheorem{proposition}[theorem]{Proposition}
\theoremstyle{definition}
\newtheorem{definition}[theorem]{Definition}
\theoremstyle{remark}
\newtheorem{remark}[theorem]{Remark}
\numberwithin{equation}{section}
\def\imod#1{\allowbreak\mkern5mu({\operator@font mod}\,\,#1)}
\begin{document}
\title[Gr\"obner bases for (all) Grassmann  manifolds]
 {Gr\"obner bases for (all) Grassmann  manifolds}

\author{Zoran Z. Petrovi\'c}
\address{University of Belgrade,
  Faculty of mathematics,
  Studentski trg 16,
  Belgrade,
  Serbia}
\email{zoranp@matf.bg.ac.rs}
\thanks{The first author was partially supported by Ministry of Education, Science and Technological Development of Republic of Serbia Project \#174032.}

\author{Branislav I. Prvulovi\'c}
\address{University of Belgrade,
  Faculty of mathematics,
  Studentski trg 16,
  Belgrade,
  Serbia}
\email{bane@matf.bg.ac.rs}
\thanks{The second author was partially supported by Ministry of Education, Science and Technological Development of Republic of Serbia Project \#174034.}
\author{Marko Radovanovi\'c}
\address{University of Belgrade,
  Faculty of mathematics,
  Studentski trg 16,
  Belgrade,
  Serbia}
\email{markor@matf.bg.ac.rs}

\thanks{The third author was partially supported by Ministry of Education, Science and Technological Development of Republic of Serbia Project \#174008.}

\subjclass[2000]{13P10, 14M15, 57N65 (primary), 57R42, 57R20, 55S45 (secondary)}




\begin{abstract}
Grassmann manifolds $G_{k,n}$ are among the central objects in
geometry and topology. The Borel picture of the mod 2 cohomology of $G_{k,n}$ is
given as a polynomial algebra modulo a certain ideal $I_{k,n}$. The purpose of
this paper is to understand this cohomology via Gr\"obner
bases.
Reduced Gr\"obner bases for the ideals $I_{k,n}$ are determined. An
application of these bases is given by proving an immersion theorem for
Grassmann manifolds $G_{5,n}$, which establishes new immersions for
an infinite family of these manifolds.
\end{abstract}

\maketitle



\section{Introduction} 
\label{intro}
Mod 2 cohomology of Grassmann manifolds $G_{k,n}=O(n+k)/O(k)\times O(n)$ is the polynomial
algebra in Stiefel-Whitney classes $w_1,\dots,w_k$ of the canonical bundle over $G_{k,n}$ modulo the ideal
$I_{k,n}$ generated by dual classes $\overline{w}_{n+1},\dots,\overline{w}_{n+k}$. Although the description of this ideal is simple enough, concrete calculations in cohomology of Grassmann manifolds
may be rather difficult to perform. The question of whether a certain
cohomology class is zero is rather important in various applications ---
for example, in determining the span of Grassmannians, in discussing
immersions and embeddings in Euclidean spaces, in the determination of
cup-length (which is related to the Lusternik-Schnirelmann category), in
some
geometrical problems which may be reduced to the question of the
existence of a non-zero section of a bundle over a Grassmann manifold,
etc.  It is known that Gr\"obner bases are
useful when one works with polynomial algebras modulo certain ideal.
The first use of Gr\"obner bases in this context appears in \cite{Monks}
where the Gr\"obner bases for $I_{2,n}$ were established for $n$ of the
form $n=2^s-3$ and $n=2^s-4$. These bases were used to prove an
immersion result for corresponding Grassmann manifolds. Another
application of Gr\"obner bases in the similar context may be found in
\cite{Fukaya}.

In \cite{Petrovic} and \cite{Petrovic1} reduced Gr\"obner bases for
$I_{2,n}$ and $I_{3,n}$ (for all $n$) were established and used to obtain
some new immersion results for Grassmann manifolds. At the time of
writing of these papers, the authors were not aware of the paper
\cite{Jaworowski}, where additive bases for mod 2 cohomology of Grassmann
manifolds were established. These additive bases together with
information about Gr\"obner bases obtained directly in \cite{Petrovic} and
\cite{Petrovic1} allowed us to obtain reduced Gr\"obner bases for
all $I_{k,n}$.

The plan of the presentation is as follows. In Section 2 some
necessary facts about
cohomology algebra  $H^{*}(G_{k,n};\mathbb{Z}_{2})$ are reviewed.
Section 3 contains main results, namely, the determination of reduced
Gr\"obner bases for all $I_{k,n}$. Section 4 is devoted to an application
of the obtained results to the immersion problem for $G_{5,n}$ for
$n$ divisible by 8.

\section{The cohomology algebra $H^{*}(G_{k,n};\mathbb{Z}_{2})$}

In this section $n$ and $k$ are fixed integers such that $n\geq  k\geq 2$.
Let $G_{k,n}=G_{k}(\mathbb{R}^{n+k})$ be the Grassmann manifold of $k$-dimensional subspaces in $\mathbb{R}^{n+k}$. Let $\gamma_{k}$ be the canonical vector bundle over $G_{k,n}$ and $w_{1},w_{2},\ldots,w_{k}$ its Stiefel-Whitney classes.
It is a direct consequence of Borel's result (\cite{Borel}) that the mod $2$ cohomology algebra of $G_{k,n}$ is isomorphic to the polynomial algebra $\mathbb{Z}_{2}[w_{1},w_{2},\ldots,w_{k}]$ modulo the ideal $I_{k,n}$ generated by the dual classes $\overline{w}_{n+1},\overline{w}_{n+2},\ldots,\overline{w}_{n+k}$. The following equality holds for these dual classes:
\[(1+w_{1}+w_{2}+\cdots+w_{k})(1+\overline{w}_{1}+\overline{w}_{2}+\cdots)=1,\]
and therefore, they satisfy the recurrence relation
\begin{equation}\label{recurrence}
\overline{w}_{r+k}=\sum_{i=1}^{k}w_{i}\overline{w}_{r+k-i}, \quad r\geq 1.
\end{equation}
Also, it is not hard to verify that the explicit formula for $\overline{w}_{r}$ ($r\geq 1$) is the following (see \cite[p.\ 3]{Petrovic1}):
\begin{equation}\label{dualclass}
\overline{w}_{r}=\sum_{a_{1}+2a_{2}+\cdots+ka_{k}=r}[a_{1},a_{2},\ldots,a_{k}]w_{1}^{a_{1}}w_{2}^{a_{2}}\cdots w_{k}^{a_{k}},
\end{equation}
where $a_{1},a_{2},\ldots,a_{k}$ are understood to be nonnegative integers and $[a_{1},a_{2},\ldots,a_{k}]$ denotes the multinomial coefficient
\begin{eqnarray*}
[a_{1},a_{2},\ldots,a_{k}]
&=&\binom{a_{1}+a_{2}+\cdots+a_{k}}{a_{1}}\binom{a_{2}+\cdots+a_{k}}{a_{2}}\cdots\binom{a_{k-1}+a_{k}}{a_{k-1}}\\
&=&\prod_{t=2}^k\binom{\sum_{j=t-1}^k a_j}{a_{t-1}}.
\end{eqnarray*}

On the other hand, in \cite{Jaworowski} Jaworowski detected an additive basis for $H^{*}(G_{k,n};\mathbb{Z}_{2})$. Let us conclude this brief opening section by stating his result.
\begin{theorem}[\cite{Jaworowski}]\label{Jaworowski}
The set $B=\{w_{1}^{a_{1}}w_{2}^{a_{2}}\cdots w_{k}^{a_{k}}\mid a_{1}+a_{2}+\cdots+a_{k}\leq  n\}$ is a vector space basis for $H^{*}(G_{k,n};\mathbb{Z}_{2})\cong\mathbb{Z}_{2}[w_{1},w_{2},\ldots,w_{k}]/I_{k,n}$.
\end{theorem}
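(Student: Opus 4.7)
My plan is to establish the theorem by a dimension-matching argument combined with a spanning (or independence) argument in each cohomological degree.

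First, I would verify that $|B|=\binom{n+k}{k}$ using the stars-and-bars bijection $(a_{1},\ldots,a_{k})\mapsto(a_{1},\ldots,a_{k},n-\textstyle\sum_{i}a_{i})$ between elements of $B$ and nonnegative integer $(k+1)$-tuples summing to $n$. Independently, $G_{k,n}$ admits a Schubert cell decomposition with exactly $\binom{n+k}{k}$ cells (indexed by partitions fitting in an $n\times k$ box), yielding $\dim_{\mathbb{Z}_{2}}H^{*}(G_{k,n};\mathbb{Z}_{2})=\binom{n+k}{k}$. Since the counts agree, it suffices to prove that the images of the elements of $B$ in the quotient are spanning or linearly independent.

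To strengthen this to a graded-dimension match, I would argue that $\overline{w}_{n+1},\ldots,\overline{w}_{n+k}$ form a regular sequence in $\mathbb{Z}_{2}[w_{1},\ldots,w_{k}]$ (graded by cohomological degree, with $\deg w_{i}=i$): this is automatic because the quotient is finite-dimensional, so we have $k$ generators cutting a polynomial ring of Krull dimension $k$ down to a $0$-dimensional ring. Consequently the Poincar\'e series of $\mathbb{Z}_{2}[w_{1},\ldots,w_{k}]/I_{k,n}$ equals $\prod_{j=1}^{k}(1-t^{n+j})/\prod_{i=1}^{k}(1-t^{i})=\binom{n+k}{k}_{t}$, the Gaussian binomial. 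A classical $q$-binomial identity then gives $\binom{n+k}{k}_{t}=\sum_{|\alpha|\le n}t^{\sum ia_{i}}$, which is precisely the Poincar\'e series of the $\mathbb{Z}_{2}$-span of $B$ inside $\mathbb{Z}_{2}[w_{1},\ldots,w_{k}]$. Hence the two sides have the same dimension in every cohomological degree.

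The remaining step -- showing that the images of $B$ are actually linearly independent (equivalently, by dimension count, spanning) -- is where the main work lies. I would prove it by induction on $|\alpha|=a_{1}+\cdots+a_{k}$, showing that for $|\alpha|>n$ the monomial $w^{\alpha}$ lies in $I_{k,n}+\mathrm{span}\{w^{\beta}:|\beta|<|\alpha|\}$, by combining the relations $\overline{w}_{r}\equiv 0\pmod{I_{k,n}}$ for $r>n$ (which follow from (\ref{recurrence}) and the definition of $I_{k,n}$) with the multinomial expansion (\ref{dualclass}). The main obstacle I anticipate is that a single such relation does not suffice, because the multinomial coefficient $[a_{1},\ldots,a_{k}]$ may be even and because "other" monomials appearing in $\overline{w}_{d}$ (such as $w_{1}^{d}$) can have $|\beta|\ge|\alpha|$; resolving this requires either a Lucas-type analysis of the parities of multinomial coefficients or a delicate simultaneous reduction using the full system of $k$ relations $\overline{w}_{n+1},\ldots,\overline{w}_{n+k}$. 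Once spanning in each graded piece is achieved, the Poincar\'e-series match upgrades it to a basis, completing the proof.
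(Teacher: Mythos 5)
The paper does not prove this theorem: it is quoted from \cite{Jaworowski} and used as a black box (indeed, it is the crucial external input that, via Proposition~\ref{beck}, certifies $G$ as a Gr\"obner basis). So there is no in-paper argument to compare against, and your proposal should be judged on its own merits as a proof from scratch.

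Your Poincar\'e-series analysis is correct and well-organized. The quotient $\mathbb{Z}_2[w_1,\ldots,w_k]/I_{k,n}$ is finite-dimensional (it is $H^*(G_{k,n};\mathbb{Z}_2)$ by Borel, and $G_{k,n}$ is a closed manifold), so the $k$ homogeneous generators $\overline{w}_{n+1},\ldots,\overline{w}_{n+k}$ form a homogeneous system of parameters in the Cohen--Macaulay ring $\mathbb{Z}_2[w_1,\ldots,w_k]$ and hence a regular sequence; the resulting Koszul computation of the Poincar\'e series, the identification with the Gaussian binomial $\binom{n+k}{k}_t$, and the partition-theoretic identity $\binom{n+k}{k}_t=\sum_{|\alpha|\le n}t^{\sum ia_i}$ (via $\lambda_j=\sum_{i\geq j}a_i$) are all correct. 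This cleanly reduces the theorem to showing that the images of $B$ span (equivalently, are independent in) each graded piece.

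However, that reduction is where the proof actually lives, and you leave it as an acknowledged gap. Your diagnosis of why the naive reduction fails is accurate: the grlex-leading monomial of $\overline{w}_r$ is $w_1^r$, so the single relations $\overline{w}_r\equiv 0$ (even all of them, for all $r>n$) only let you rewrite monomials with large $w_1$-exponent; to kill an arbitrary $w^\alpha$ with $|\alpha|=n+1$ you need, for \emph{every} such $\alpha$, an element of $I_{k,n}$ whose grlex-leading term is exactly $w^\alpha$. That is precisely the content of Propositions~\ref{degree} and~\ref{subset} about the family $G=\{g_M\}$, and it is the hard combinatorial core of the whole paper. In other words, the ``delicate simultaneous reduction'' you anticipate is essentially the construction of $G$ itself, which would make your proof circular inside the logic of this paper (the paper uses Jaworowski to prove $G$ is a Gr\"obner basis, not the other way around). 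A non-circular route to close the gap would be Jaworowski's own argument, or Schubert calculus: the Schubert classes $\sigma_\lambda$ for $\lambda$ in the $n\times k$ box give a cellular basis, and the Giambelli/Jacobi--Trudi formulas produce a unitriangular change of basis between $\{\sigma_\lambda\}$ and $\{w^\alpha: |\alpha|\le n\}$. Without some such input, the proposal as written establishes the numerics but not the theorem.
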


\section{Gr\"obner bases}

As usual, $\mathbb{Z}$ denotes the set of all integers. Recall that for $\alpha,\beta\in\mathbb Z$ the binomial coefficient $\binom{\alpha}{\beta}$ is defined by \[\binom{\alpha}{\beta}:=\left\{\begin{array}{cl}
                \frac{\alpha(\alpha-1)\cdots(\alpha-\beta+1)}{\beta!}, & \beta>0 \\
                1, & \beta=0 \\
                0, & \beta<0\end{array}\right.,\]
and therefore, the following lemma is straightforward.

\begin{lemma}\label{lem1}
If $\binom{\alpha}{\beta}\neq0$, then $\alpha\geq \beta$ or $\alpha\leq -1$.
\end{lemma}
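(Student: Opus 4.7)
The plan is to argue the contrapositive: assume $\alpha<\beta$ and $\alpha\geq 0$, and show that $\binom{\alpha}{\beta}=0$. The three clauses in the definition then reduce the task to a trivial inspection of the numerator in the first clause.

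First I would dispose of the trivial $\beta$-cases. If $\beta<0$, then by definition $\binom{\alpha}{\beta}=0$, contradicting the hypothesis. If $\beta=0$, then $\binom{\alpha}{\beta}=1$ and the conclusion $\alpha\geq\beta$ or $\alpha\leq -1$ is vacuous because every integer satisfies $\alpha\geq 0$ or $\alpha\leq -1$. So it remains to handle $\beta>0$, where we have the product formula
\[
\binom{\alpha}{\beta}=\frac{\alpha(\alpha-1)\cdots(\alpha-\beta+1)}{\beta!}.
\]

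Under the assumption $0\leq\alpha<\beta$, the integer $\alpha$ lies in $\{0,1,\ldots,\beta-1\}$, so $j:=\alpha$ is a valid index in $\{0,1,\ldots,\beta-1\}$. Hence the factor $\alpha-j=0$ appears in the numerator, forcing $\binom{\alpha}{\beta}=0$. This is exactly the contradiction needed, and the lemma follows. There is no real obstacle here; the only thing to be careful about is not to let $\beta=0$ slip past the product-formula branch of the definition, which is why I separate the cases on $\beta$ at the outset.
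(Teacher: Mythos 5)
Your argument is correct and is exactly the routine verification the paper leaves implicit when it says the lemma is "straightforward" from the three-clause definition: dispose of $\beta<0$ and $\beta=0$ by inspection, then for $\beta>0$ note that $0\le\alpha\le\beta-1$ makes the factor $\alpha-\alpha=0$ appear among $\alpha,\alpha-1,\ldots,\alpha-\beta+1$, killing the product. (One small terminological quibble: in the $\beta=0$ case the conclusion is \emph{trivially true} for every integer $\alpha$, not "vacuous"; vacuity would mean the hypothesis fails, which is not what is happening there.)
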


Recall also the well-known formula (which holds for all $\alpha,\beta\in\mathbb Z$)
\begin{equation}\label{binomIdentity}
\binom{\alpha}{\beta}=\binom{\alpha-1}{\beta}+\binom{\alpha-1}{\beta-1}.
\end{equation}

Let us now introduce some notations that we are going to use throughout this section. For an integer $m\geq  2$ and an $m$-tuple $N$ of integers we define the following $m$-tuples obtained from $N$ (for $i\leq  m$ and $i<j\leq  m$):
\begin{itemize}
\item[] $N^i$ -- by adding 1 to the $i$-th coordinate of $N$ (if $i<1$, then $N^i:=N$);
\item[] $N_{i}$ -- by subtracting 1 from the $i$-th coordinate of $N$ (if $i<1$, then $N_i:=N$);
\item[] $N^{i,j}$ -- by adding 1 to the $i$-th and $j$-th coordinate of $N$ (if $i<1$, then $N^{i,j}:=N^j$);
\item[] $N^{i,i}$ -- by adding 2 to the $i$-th coordinate of $N$ (if $i<1$, then $N^{i,i}:=N$);
\item[] $N_{i,j}$ -- by subtracting 1 from the $i$-th and $j$-th coordinate of $N$ (if $i<1$, then $N_{i,j}:=N_j$);
\item[] $N_{i,i}$ -- by subtracting 2 from the $i$-th coordinate of $N$ (if $i<1$, then $N_{i,i}:=N$).
\end{itemize}
For an integer $k\geq 2$, a $k$-tuple $A=(a_1,a_2,\ldots,a_k)$ and a $(k-1)$-tuple $M=(m_2,m_3,\ldots,m_{k})$ of  integers, let:
\begin{itemize}
\item[] $S_A:=\displaystyle\sum_{j=1}^k a_j$, $S'_A:=\displaystyle\sum_{j=1}^k ja_j$, and $S_M:=\displaystyle\sum_{j=2}^k m_j$, $S'_M:=\displaystyle\sum_{j=2}^k (j-1)m_j$;

\item[] $P_t(A,M):=\displaystyle\binom{\sum_{j=t-1}^k a_j-\sum_{j=t}^k m_j}{a_{t-1}}$, for $t=\overline{2,k}$;

\item[] $P(A,M):=\displaystyle\prod_{t=2}^k P_t(A,M)$.
\end{itemize}
For example, $P_{2}(A,M)=\displaystyle\binom{S_{A}-S_{M}}{a_{1}}$. Also, $P(A,\mathbf{0})=[a_{1},a_{2},\ldots,a_{k}]$, where $\mathbf{0}=(\displaystyle\underbrace{0,0,\ldots,0}_{k-1})$.

Henceforth, the integers $k$ and $n$ with the property $n\geq  k\geq 2$ are fixed. Observe the polynomial algebra $\mathbb{Z}_{2}[w_{1},w_{2},\ldots,w_{k}]$. Let us now define certain polynomials in $\mathbb{Z}_{2}[w_{1},w_{2},\ldots,w_{k}]$ which will be important in our considerations.

\begin{definition}\label{defG}  For a $(k-1)$-tuple of nonnegative integers $M=(m_2,\ldots,m_k)$, let
\[g_M:=\sum_{S'_A=n+1+S'_M}P(A,M)\cdot W^A,\]
where the sum is taken over all $k$-tuples of nonnegative integers $A=(a_1,a_2,\ldots,a_k)$ such that $S'_A=n+1+S'_M$, and $W^A=w_1^{a_1}w_2^{a_2}\cdots w_k^{a_k}$.

Moreover, let\[G:=\{g_M\mid S_M\leq  n+1\}.\]
\end{definition}
Note that, by (\ref{dualclass}), $\overline{w}_{n+1}=g_{\mathbf{0}}\in G$.

Our aim is to prove that $G$ is a Gr\"obner basis for  $I_{k,n}=(\overline{w}_{n+1},,\ldots,\overline{w}_{n+k})$ which determines the cohomology algebra $H^{*}(G_{k,n};\mathbb{Z}_{2})$. In order to do so, first we need to specify a term ordering in $\mathbb{Z}_{2}[w_{1},w_{2},\ldots,w_{k}]$. We shall use the grlex ordering (which will be denoted by $\preceq $) on terms (monomials) in $\mathbb{Z}_{2}[w_{1},w_{2},\ldots,w_{k}]$ with $w_{1}>w_{2}>\cdots>w_{k}$. It is defined as follows. The terms are compared by the sum of the exponents and if these are equal for two terms, they are compared lexicographically from the left. That is, for $k$-tuples $A$ and $B$ of nonnegative integers we shall write $W^{A}\prec W^{B}$ if either $S_{A}<S_{B}$ or else $S_{A}=S_{B}$ and $a_{s}<b_{s}$ where $s=\min \{i\mid a_{i}\neq b_{i}\}$. Of course, $W^{A}\preceq  W^{B}$ means that either $W^{A}\prec W^{B}$ or $W^{A}=W^{B}$.

In fact, we are going to prove that $G$ is the reduced Gr\"obner basis for $I_{k,n}$ with respect to the grlex ordering $\preceq $. We start with a lemma.

\begin{lemma}\label{binom0}
If a $k$-tuple $A=(a_1,\ldots,a_k)$ and a $(k-1)$-tuple $M=(m_2,\ldots,m_k)$ of nonnegative integers are such that $P(A,M)\neq0$, then $S_A<S_M$ or else $\sum_{j=t}^k a_j\geq \sum_{j=t}^k m_j$ for all $t=\overline{2,k}$.
\end{lemma}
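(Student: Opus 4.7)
The plan is to unpack the definition of $P(A,M)$, apply Lemma~\ref{lem1} factor by factor to obtain a dichotomy at each index $t$, and then show by a short downward-induction argument that the ``bad'' branch of the dichotomy propagates from any offending $t_{0}$ all the way down to $t=2$, where it yields exactly $S_{A}<S_{M}$.

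More concretely, introduce the abbreviations $\sigma_{t}:=\sum_{j=t}^{k}a_{j}$ (so $\sigma_{1}=S_{A}$) and $\mu_{t}:=\sum_{j=t}^{k}m_{j}$ (so $\mu_{2}=S_{M}$), so that $P_{t}(A,M)=\binom{\sigma_{t-1}-\mu_{t}}{a_{t-1}}$. Since $P(A,M)\neq 0$, every factor $P_{t}(A,M)$ is nonzero, and because $a_{t-1}\geq 0$, Lemma~\ref{lem1} yields for each $t\in\{2,\ldots,k\}$ the dichotomy
\begin{equation*}
(\mathrm{I}_{t})\ \sigma_{t-1}-\mu_{t}\geq a_{t-1}\quad\text{(equivalently }\sigma_{t}\geq\mu_{t}\text{)},\qquad (\mathrm{II}_{t})\ \sigma_{t-1}-\mu_{t}\leq -1\quad\text{(equivalently }\sigma_{t-1}<\mu_{t}\text{)}.
\end{equation*}

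Now argue by contradiction: suppose the conclusion of the lemma fails, i.e. $S_{A}\geq S_{M}$ (that is, $\sigma_{1}\geq\mu_{2}$) yet $\sigma_{t_{0}}<\mu_{t_{0}}$ for some smallest such $t_{0}\in\{2,\ldots,k\}$. Then $(\mathrm{I}_{t_{0}})$ fails, so $(\mathrm{II}_{t_{0}})$ holds: $\sigma_{t_{0}-1}<\mu_{t_{0}}$. If $t_{0}=2$ this is already $\sigma_{1}<\mu_{2}=S_{M}$, contradicting $\sigma_{1}\geq S_{M}$. If $t_{0}>2$, observe that $\mu_{t_{0}-1}=m_{t_{0}-1}+\mu_{t_{0}}\geq\mu_{t_{0}}$ (because $m_{t_{0}-1}\geq 0$); combining $\sigma_{t_{0}-1}<\mu_{t_{0}}\leq\mu_{t_{0}-1}$ shows that $(\mathrm{I}_{t_{0}-1})$ fails, so $(\mathrm{II}_{t_{0}-1})$ gives $\sigma_{t_{0}-2}<\mu_{t_{0}-1}$.

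Iterating this step downward (the key observation at each stage being $\mu_{t-1}\geq\mu_{t}$, which forces $(\mathrm{II}_{t-1})$ whenever $\sigma_{t-1}<\mu_{t}$), we eventually reach $(\mathrm{II}_{2})$, namely $\sigma_{1}<\mu_{2}=S_{M}$, contradicting $S_{A}=\sigma_{1}\geq S_{M}$. The only real subtlety is this downward propagation, but it is clean because the nonnegativity of the $m_{j}$'s makes $\mu_{t}$ monotone nonincreasing in $t$, so no delicate bookkeeping is required.
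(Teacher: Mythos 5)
Your proof is correct and rests on exactly the same dichotomy drawn from Lemma~\ref{lem1} that the paper uses; the only difference is that you argue contrapositively, propagating the ``bad'' inequality $\sigma_{t-1}<\mu_{t-1}$ downward from an offending $t_{0}$ to $t=2$, whereas the paper runs a direct upward induction from $t=2$ to $t=k$ under the hypothesis $S_A\geq S_M$. These are the two faces of the same argument, and both hinge on the monotonicity $\mu_{t-1}\geq\mu_{t}$ (nonnegativity of the $m_j$) to keep Lemma~\ref{lem1} applicable at each step.
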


\begin{proof}
Let us assume that $S_A\geq  S_M$. We will prove by induction on $t$ that $\sum_{j=t}^k a_j\geq \sum_{j=t}^k m_j$, for $t=\overline{2,k}$. Since
$\binom{S_A-S_M}{a_1}=P_2(A,M)\neq0$ and $S_A-S_M\geq 0$,
by Lemma \ref{lem1} we have that $S_A-S_M\geq  a_1$, and therefore $\sum_{j=2}^k a_j\geq \sum_{j=2}^k m_j$.

Suppose now that $\sum_{j=t}^k a_j\geq \sum_{j=t}^k m_j$ for some $t$ such that $2\leq  t\leq  k-1$. Since $P_{t+1}(A,M)\neq0$ and $\sum_{j=t}^k a_j\geq \sum_{j=t}^k m_j\geq \sum_{j=t+1}^k m_j$, again by Lemma \ref{lem1} we conclude that $\sum_{j=t}^k a_j-\sum_{j=t+1}^k m_j\geq  a_t$. Hence, $\sum_{j=t+1}^k a_j\geq \sum_{j=t+1}^k m_j$.
\end{proof}

For a nonzero polynomial $f=\sum_{i=1}^{r}t_{i}\in\mathbb{Z}_{2}[w_{1},w_{2},\ldots,w_{k}]$, where $t_{i}$ are pairwise different terms, let $T(f):=\{t_1,t_2,\ldots,t_r\}$ ($T(0):=\emptyset$). The leading term of $f\neq0$, denoted by $\mathrm{LT}(f)$, is defined as $\max T(f)$ with respect to $\preceq $.

\begin{proposition}\label{degree}
Let $M=(m_2,\ldots,m_k)$ be a $(k-1)$-tuple of nonnegative integers such that $S_{M}\leq  n+1$ (i.e., such that $g_M\in G$). Then $g_M\neq 0$ and $\mathrm{LT}(g_M)=W^{\overline{M}}$, where \linebreak $\overline{M}=(n+1-S_M,m_2,\ldots,m_k)$.
Moreover, if $W^{A}\in T(g_M)\setminus\{W^{\overline{M}}\}$ for some $k$-tuple $A$ of nonnegative integers, then $S_{A}<n+1$.
\end{proposition}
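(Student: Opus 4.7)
My plan is to pinpoint $W^{\overline{M}}$ as a term of $g_M$, compute its coefficient explicitly to see that it is $1$, and then show every other $A$ contributing to $g_M$ satisfies $S_A<n+1$. Because grlex first compares total degree, this simultaneously proves $g_M\neq 0$, identifies the leading term, and gives the ``moreover'' statement.

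First I would verify that $\overline{M}=(n+1-S_M,m_2,\ldots,m_k)$ is an admissible index: since $S_M\leq n+1$, all entries are nonnegative, and a direct computation gives $S'_{\overline{M}}=(n+1-S_M)+\sum_{j=2}^k jm_j=n+1+S'_M$, so $W^{\overline{M}}$ is one of the monomials summed over in the definition of $g_M$. Next I would compute $P(\overline{M},M)$ term by term. For $t=2$ we get $\binom{S_{\overline{M}}-S_M}{n+1-S_M}=\binom{n+1-S_M}{n+1-S_M}=1$, and for each $t\geq 3$ we get $\binom{\sum_{j=t-1}^k m_j-\sum_{j=t}^k m_j}{m_{t-1}}=\binom{m_{t-1}}{m_{t-1}}=1$. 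Hence $P(\overline{M},M)=1$, so $W^{\overline{M}}\in T(g_M)$ with coefficient $1$; in particular $g_M\neq 0$.

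Now take any other $A$ with $W^A\in T(g_M)$, so $P(A,M)\neq 0$ and $S'_A=n+1+S'_M$. Applying Lemma \ref{binom0}, either $S_A<S_M\leq n+1$ (and we are done for this $A$), or $\sum_{j=t}^k a_j\geq\sum_{j=t}^k m_j$ for every $t=2,\ldots,k$. In the latter case I would rewrite $S'_A-S_A=\sum_{j=2}^k(j-1)a_j$ and use Abel summation to get $\sum_{j=2}^k(j-1)a_j=\sum_{i=2}^k\sum_{j=i}^k a_j\geq\sum_{i=2}^k\sum_{j=i}^k m_j=\sum_{j=2}^k(j-1)m_j=S'_M$. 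Combined with $S'_A=n+1+S'_M$ this yields $S_A\leq n+1$, with equality exactly when $\sum_{j=i}^k a_j=\sum_{j=i}^k m_j$ for all $i\geq 2$, i.e. $a_j=m_j$ for $j\geq 2$, which then forces $a_1=n+1-S_M$ and $A=\overline{M}$. Thus $A\neq\overline{M}$ implies $S_A<n+1$.

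The main obstacle I anticipate is precisely the Abel-summation step in the second case of Lemma \ref{binom0}: one must convert the pointwise tail inequalities $\sum_{j\geq t}a_j\geq\sum_{j\geq t}m_j$ into the weighted inequality $\sum(j-1)a_j\geq\sum(j-1)m_j$ and keep track of when equality can hold, since it is this rigidity that both singles out $W^{\overline{M}}$ as the unique top-degree term and makes $\overline{M}$ the leading exponent under grlex.
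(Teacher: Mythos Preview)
Your proposal is correct and follows essentially the same route as the paper. Both arguments verify $P(\overline{M},M)=1$ directly, then invoke Lemma~\ref{binom0} and sum the resulting tail inequalities (what you call Abel summation is precisely the paper's ``summing up these inequalities'') against the identity $S'_A-S_A=\sum_{j\geq 2}(j-1)a_j$ to force $S_A\leq n+1$ with equality only at $A=\overline{M}$.
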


\begin{proof}
If we define $m_1:=n+1-S_M$, then obviously $P_t(\overline{M},M)=\binom{m_{t-1}}{m_{t-1}}=1$, for $t=\overline{2,k}$, and therefore $P(\overline{M},M)=1$. Furthermore,\[S'_{\overline{M}}=\sum_{j=1}^k jm_j=n+1-S_M+\sum_{j=2}^k jm_j=n+1+\sum_{j=2}^k (j-1) m_j=n+1+S'_M,\]
and hence $W^{\overline{M}}\in T(g_M)$. So, $g_M\neq0$.

Now take a $k$-tuple $A=(a_1,\ldots,a_k)$ of nonnegative integers such that $S'_A=n+1+S'_M$ and $P(A,M)\equiv1\pmod2$, i.e., $W^A\in T(g_M)$. Since $S_{\overline{M}}=n+1$, in order to finish the proof of the proposition, it suffices to show that if $S_A\geq  n+1$, then $A=\overline{M}$.

Since $S_M\leq  n+1\leq  S_A$, by Lemma \ref{binom0} we have the following $k-1$ inequalities:
\begin{equation}\label{ineq}
\begin{array}{rcl}
                a_k&\geq & m_k, \\
                a_{k-1}+a_k&\geq & m_{k-1}+m_k,\\
                &\vdots&\\
                a_2+\cdots+a_k&\geq &m_2+\cdots+m_k.
\end{array}
\end{equation}
Summing up these inequalities, we get
\[\sum_{j=2}^k (j-1)a_j\geq  \sum_{j=2}^k (j-1)m_j.\]

On the other hand, since $S_A\geq  n+1$ and $S'_A=n+1+S'_M$,
\[\sum_{j=2}^k (j-1)a_j=\sum_{j=1}^k (j-1)a_j=S'_A-S_A\leq  S'_A-(n+1)=S'_M=\sum_{j=2}^k (j-1)m_j,\]
so all the inequalities in (\ref{ineq}) are in fact equalities and $S_A=n+1$. Hence, $a_t=m_t$ for $t=\overline{2,k}$, and $a_1=S_{A}-\sum_{j=2}^{k}a_j=n+1-S_M$, i.e., $A=\overline{M}$.
\end{proof}

Prior to the formulation of the following lemma, we would like to emphasize that for a $(k-1)$--tuple $M=(m_2,\ldots,m_k)$, by our definition, $M^i=(m_2,\ldots,m_{i+1}+1,\ldots,m_k)$, $i=\overline{1,k-1}$, and likewise for $M^{i,j}$, $M^{i,i}$, $M_i$, etc. For example, the $(k-1)$--tuple $M^{2}$ is defined as $(m_2,m_3+1,\ldots,m_k)$, and not as $(m_2+1,m_3,\ldots,m_k)$.

\begin{lemma}\label{pab} Let $A=(a_1,a_2,\ldots,a_k)$ be a $k$-tuple and $M=(m_2,\ldots,m_k)$ a $(k-1)$-tuple of integers.
\begin{itemize}
\item[(a)] For $1\leq  i\leq  j\leq  k-2$,
          \[P(A,M^{i,j})\equiv P(A_{i},M^j)+P(A,M^{i-1,j+1})+P(A_{j+1},M^{i-1})\pmod2.\]
\item[(b)] For $1\leq  i\leq  k-1$,
          \[P(A,M^{i,k-1})\equiv P(A_i,M^{k-1})+P(A_k,M^{i-1})\pmod2.\]
\end{itemize}
\end{lemma}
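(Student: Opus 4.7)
The plan is to proceed factor-by-factor in the product $P(\cdot,\cdot)=\prod_{t=2}^{k} P_t(\cdot,\cdot)$, observing that among the four (resp.\ three) arguments appearing in the claim only a small number of $P_t$ factors can ever differ, and that those differences fit a ``Pascal block'' pattern to which (\ref{binomIdentity}) applied modulo $2$ yields the required cancellation.

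First I would set $\alpha_t:=\sum_{s=t-1}^{k} a_s-\sum_{s=t}^{k} m_s$ and unpack the definition to
\[ P_t(A,M^{i,j})=\binom{\alpha_t-[i\geq t-1]-[j\geq t-1]}{a_{t-1}}, \]
recording parallel formulas for the other three arguments (each decrement of some $a_s$ costs a $-1$ on the top, and, only when $t=s+1$, an additional $-1$ on the bottom). For part (a), the key input is the elementary indicator identity
\[ [i\geq t-1]+[j\geq t-1]=[i\geq t]+[j\geq t-2]\quad\text{whenever }t-1\neq i\text{ and }t-2\neq j, \]
which forces all four $P_t$-factors to coincide for every $t\notin\{i+1,j+2\}$; denote their common product by $R$. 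At the two exceptional positions the factors, listed in the order $(A,M^{i,j}),(A_i,M^j),(A,M^{i-1,j+1}),(A_{j+1},M^{i-1})$, come out to
\[ \binom{\alpha_{i+1}-2}{a_i},\ \binom{\alpha_{i+1}-2}{a_i-1},\ \binom{\alpha_{i+1}-1}{a_i},\ \binom{\alpha_{i+1}-1}{a_i}\quad(t=i+1) \]
and
\[ \binom{\alpha_{j+2}}{a_{j+1}},\ \binom{\alpha_{j+2}}{a_{j+1}},\ \binom{\alpha_{j+2}-1}{a_{j+1}},\ \binom{\alpha_{j+2}-1}{a_{j+1}-1}\quad(t=j+2). \]
Pairing $P(A,M^{i,j})+P(A_i,M^j)$ and invoking (\ref{binomIdentity}) at $t=i+1$ collapses it mod $2$ to $R\cdot\binom{\alpha_{i+1}-1}{a_i}\binom{\alpha_{j+2}}{a_{j+1}}$; pairing $P(A,M^{i-1,j+1})+P(A_{j+1},M^{i-1})$ and using (\ref{binomIdentity}) at $t=j+2$ yields the same value. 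The total four-term sum therefore vanishes mod $2$, giving (a).

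Part (b) is the degenerate case $j=k-1$: the would-be second exceptional position $t=j+2=k+1$ now lies outside the product range $\{2,\ldots,k\}$, so only $t=i+1$ is special. The three $P_{i+1}$-values $\binom{\alpha_{i+1}-2}{a_i},\binom{\alpha_{i+1}-2}{a_i-1},\binom{\alpha_{i+1}-1}{a_i}$ sum to $0\pmod2$ by one application of (\ref{binomIdentity}), and all other $P_t$-factors agree across the three products; (b) follows. The main obstacle I anticipate is the indicator bookkeeping --- in particular the boundary conventions ($M^{i-1}=M$ when $i=1$, and the coincidence $i=j$ allowed in (a)) --- which are best absorbed by the uniform reading $[i-1\geq t-1,\ i\geq 2]=[i\geq t]$ valid for $t\geq 2$ and $i\geq 1$, together with the standing hypothesis $i\leq j$.
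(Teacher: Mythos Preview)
Your proposal is correct and follows essentially the same approach as the paper: both proofs observe that the factors $P_t$ of the four (resp.\ three) products coincide for all $t\notin\{i+1,j+2\}$, and then apply Pascal's identity (\ref{binomIdentity}) at the exceptional positions to obtain the desired congruence. Your use of Iverson brackets and the pairing $(P(A,M^{i,j})+P(A_i,M^j))$ versus $(P(A,M^{i-1,j+1})+P(A_{j+1},M^{i-1}))$ is a slightly cleaner packaging of the same computation the paper carries out via the chain of identities (\ref{1.1})--(\ref{2.4}).
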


\begin{proof} Let $1\leq  i\leq  j\leq  k-1$. It is immediate from the definition that for all $t=\overline{2,k}$,
\[P_t(A,M^{i,j})=\binom{a_{t-1}+a_t+\cdots+a_k-m_t-\cdots-m_k-\delta_t}{a_{t-1}},\]
where $\delta_t=\left\{\begin{array}{rl}
                2, & t\leq  i+1 \\
                1, & i+2\leq  t\leq  j+1 \\
                0, & t>j+1\end{array}\right.$.
Also, if $t\neq i+1$, then
\[P_t(A_i,M^j)=\binom{a_{t-1}+a_t+\cdots+a_k-m_t-\cdots-m_k-\delta_t}{a_{t-1}},\]
 and so,
\begin{equation}
P_t(A,M^{i,j})=P_t(A_i,M^j), \quad\mbox{ for } t\neq i+1.\label{1.1}
\end{equation}

Likewise, using formula (\ref{binomIdentity}) we get
\begin{equation}
P_{i+1}(A,M^{i,j})+P_{i+1}(A_i,M^j)=P_{i+1}(A,M^j),\label{1.2}
\end{equation}
since the left-hand side is equal to \[\binom{a_i+\cdots+a_k-m_{i+1}-\cdots-m_k-2}{a_i}+\binom{a_i+\cdots+a_k-m_{i+1}-\cdots-m_k-2}{a_i-1}\]
and the right-hand side to
\[\binom{a_i+\cdots+a_k-m_{i+1}-\cdots-m_k-1}{a_i}.\]

\medskip

(a) In this case, similarly as for (\ref{1.1}) and (\ref{1.2}), one obtains the following equalities:
\begin{eqnarray}
P_t(A_i,M^j)&=&P_t(A,M^{i-1,j+1}),\quad\mbox{for $t\not\in\{i+1,j+2\}$}\\
P_t(A,M^{i-1,j+1})&=&P_t(A_{j+1},M^{i-1}),\quad\mbox{for $t\neq j+2$}\\
P_{i+1}(A,M^j)&=&P_{i+1}(A,M^{i-1,j+1})\\
P_{j+2}(A_i,M^j)&=& P_{j+2}(A,M^{i-1,j+1})+P_{j+2}(A_{j+1},M^{i-1}).\label{2.4}
\end{eqnarray}
So, using identities (\ref{1.1})--(\ref{2.4}), we have
\begin{align*}
\phantom{a}&P(A,M^{i,j})=\prod_{t=2}^k P_t(A,M^{i,j})=P_{i+1}(A,M^{i,j})\cdot\prod_{\substack{t=2\\ t\neq i+1}}^{k}P_t(A_i,M^j)\\
&\equiv\left(P_{i+1}(A_i,M^j)+P_{i+1}(A,M^j)\right)\cdot\prod_{\substack{t=2\\ t\neq i+1}}^{k}P_t(A_i,M^j) \\
&=\prod_{t=2}^{k}P_t(A_i,M^j)+P_{i+1}(A,M^{i-1,j+1})\cdot\prod_{\substack{t=2\\ t\neq i+1}}^{k}P_t(A_i,M^j)\\
&=P(A_i,M^j)+P_{j+2}(A_i,M^j)\cdot\prod_{\substack{t=2\\ t\neq j+2}}^{k}P_t(A,M^{i-1,j+1})\\
&=P(A_i,M^j)+\left(P_{j+2}(A,M^{i-1,j+1})+P_{j+2}(A_{j+1},M^{i-1})\right)\prod_{\substack{t=2\\t\neq j+2}}^{k}P_t(A,M^{i-1,j+1}) \\
&=P(A_i,M^j)+P(A,M^{i-1,j+1})+P(A_{j+1},M^{i-1})\pmod2.
\end{align*}

\medskip

(b) In a similar manner as before, for $1\leq  i\leq  k-1$ one can obtain two additional equalities:
\begin{eqnarray}
P_t(A_i,M^{k-1})&=&P_t(A_k,M^{i-1}),\quad\mbox{for $t\neq i+1$},\label{3.1}\\
P_{i+1}(A,M^{k-1})&=&P_{i+1}(A_k,M^{i-1}).\label{3.2}
\end{eqnarray}
Now, using identities (\ref{1.1})--(\ref{1.2}) and (\ref{3.1})--(\ref{3.2}), we have
\begin{eqnarray*}
P(A,M^{i,k-1})&=&\prod_{t=2}^{k}P_t(A,M^{i,k-1})=P_{i+1}(A,M^{i,k-1})\cdot\prod_{\substack{t=2\\t\neq i+1}}^{k}P_{t}(A_i,M^{k-1})\\
&\equiv&\left(P_{i+1}(A_i,M^{k-1})+P_{i+1}(A,M^{k-1})\right)\cdot\prod_{\substack{t=2\\t\neq i+1}}^k P_{t}(A_i,M^{k-1})\\
&=&P(A_i,M^{k-1})+P(A_k,M^{i-1})\pmod2,
\end{eqnarray*}
and we are done.
\end{proof}

Note that we could unify parts (a) and (b) of the previous lemma by stating that $P(A,M^{i,j})\equiv P(A_{i},M^j)+P(A_{j+1},M^{i-1})+P(A,M^{i-1,j+1})\pmod2$, for $1\leq  i\leq  j\leq  k-1$, with convention that $P(A,M^{i-1,j+1})=0$ if $j=k-1$.

\begin{proposition}\label{reccurence} Let $M=(m_2,\ldots,m_k)$ be a $(k-1)$-tuple of nonnegative integers and $1\leq  i\leq  j\leq  k-1$. Then in the polynomial algebra $\mathbb{Z}_{2}[w_{1},w_{2},\ldots,w_{k}]$, we have
the identity
\[g_{M^{i,j}}=w_ig_{M^j}+w_{j+1}g_{M^{i-1}}+g_{M^{i-1,j+1}},\]
where the polynomial $g_{M^{i-1,j+1}}$ is understood to be zero if $j=k-1$.
\end{proposition}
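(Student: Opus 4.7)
My plan is to prove the identity by comparing coefficients of each monomial $W^A$ on both sides and then invoking Lemma \ref{pab}. The whole argument is essentially a bookkeeping exercise: once one checks that the polynomials on both sides are homogeneous with respect to $S'$ of the same degree, the identity reduces term-by-term to the congruence proved in Lemma \ref{pab}.

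First I would verify the ``degree match.'' Since adding $1$ to the $i$-th coordinate of $M=(m_2,\ldots,m_k)$ raises $m_{i+1}$ by one and the coefficient of $m_{i+1}$ in $S'_M=\sum_{j=2}^k(j-1)m_j$ is $i$, we have $S'_{M^i}=S'_M+i$. Hence
\[S'_{M^{i,j}}=S'_M+i+j=(i-1)+(j+1)+S'_M=S'_{M^{i-1,j+1}}.\]
Thus every monomial appearing in $g_{M^{i,j}}$, $g_{M^{i-1,j+1}}$, $w_i g_{M^j}$, and $w_{j+1}g_{M^{i-1}}$ is of the form $W^A$ with $S'_A=n+1+S'_M+i+j$.

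Next I would extract the coefficient of a fixed $W^A$ (with $A=(a_1,\ldots,a_k)$ of nonnegative integers and $S'_A=n+1+S'_M+i+j$) on both sides. On the left the coefficient is $P(A,M^{i,j})$. On the right, writing $w_i g_{M^j}=\sum_B P(B,M^j)\,w_iW^B$ and reindexing by $A=B^i$ (so $B=A_i$), the contribution is $P(A_i,M^j)$; similarly $w_{j+1}g_{M^{i-1}}$ contributes $P(A_{j+1},M^{i-1})$; and $g_{M^{i-1,j+1}}$ contributes $P(A,M^{i-1,j+1})$ (which is zero by the stated convention when $j=k-1$, matching part (b) of Lemma \ref{pab}). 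The desired identity then follows directly from Lemma \ref{pab}.

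The main potential pitfall is the handling of boundary cases, and I would address them explicitly. If $a_i=0$, then $A_i$ has $-1$ in the $i$-th slot, but $P_{i+1}(A_i,M^j)=\binom{\ast}{-1}=0$, so $P(A_i,M^j)=0$, consistent with the fact that $W^A$ does not literally appear in $w_iW^{A_i}$; the same remark handles the $a_{j+1}=0$ case for the second summand. When $i=1$ the symbol $M^{i-1}=M^0$ is $M$ by the convention in the definition of $N^i$, so $w_{j+1}g_{M^{i-1}}=w_{j+1}g_M$, consistent with Lemma \ref{pab} read with $i=1$. Finally, when $j=k-1$, the third summand on the right is zero by convention, matching Lemma \ref{pab}(b). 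With these checks the coefficient comparison is complete in every case, and the proposition follows.
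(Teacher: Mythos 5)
Your overall approach — compare coefficients of each monomial $W^A$ and invoke Lemma~\ref{pab} — is exactly the paper's strategy, and the degree bookkeeping ($S'_{M^{i,j}}=S'_M+i+j=S'_{M^{i-1,j+1}}$) is correct. However, there is a genuine gap in your treatment of the boundary case $a_{j+1}=0$ when $j=k-1$. You claim that ``the same remark'' which handles $a_i=0$ also handles $a_{j+1}=0$: namely, that the slot which drops to $-1$ forces a factor $\binom{\ast}{-1}=0$. For $a_i=0$ this works because the offending slot is the $i$-th, and the factor $P_{i+1}(A_i,M^j)$ (with $i+1\leq k$) has denominator $(A_i)_i=-1$. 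But for $a_{j+1}=0$ the offending slot is the $(j+1)$-th, and the factor with denominator $(A_{j+1})_{j+1}$ is $P_{j+2}$. When $j=k-1$ this would be $P_{k+1}$, which does not occur in the product $P(A_k,M^{i-1})=\prod_{t=2}^k P_t(A_k,M^{i-1})$, so no factor automatically vanishes. Indeed $P(A_k,M^{i-1})$ can a priori be nonzero when $a_k=0$; for instance $P_k(A_k,M^{i-1})=\binom{a_{k-1}-1-m_k}{a_{k-1}}$, which is nonzero whenever $a_{k-1}\leq m_k$.

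The paper closes this case with a separate and nontrivial argument: assume $a_k=0$ and $P(A_k,M^{i-1})\neq0$, and derive by reverse induction on $t$ (using Lemma~\ref{lem1} at each step) the chain of inequalities $a_{t-1}+\cdots+a_{k-1}\leq m_t+\cdots+m_k+\varepsilon_t$; summing these gives $S'_A<S'_{A_k}$, contradicting $S'_{A_k}=S'_A-k<S'_A$. This is precisely where the degree constraint $S'_A=n+1+S'_{M^{i,j}}$ enters in an essential way, not merely as bookkeeping. Your proof is missing this argument and so does not establish the proposition in the $j=k-1$, $a_k=0$ case.
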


\begin{proof}
By Lemma \ref{pab} we have

\begin{eqnarray*}
g_{M^{i,j}}&=&\sum_{S'_A=n+1+S'_{M^{i,j}}}P(A,M^{i,j})\cdot W^A\\
&=&\sum_{S'_A=n+1+S'_{M^{i,j}}}\left(P(A_{i},M^j)+P(A_{j+1},M^{i-1})+P(A,M^{i-1,j+1})\right)\cdot W^A\\
&=&\sum_{S'_A=n+1+S'_{M^{i,j}}}P(A_{i},M^j)\cdot W^{A}+\sum_{S'_A=n+1+S'_{M^{i,j}}}P(A_{j+1},M^{i-1})\cdot W^{A}\\
&\phantom{a}&+g_{M^{i-1,j+1}},
\end{eqnarray*}
since $S'_{M^{i,j}}=S'_M+i+j=S'_M+i-1+j+1=S'_{M^{i-1,j+1}}$ (for $j\leq  k-2$). Observe also that the equality $S'_A=n+1+S'_{M^{i,j}}$ is equivalent to $S'_{A_{i}}=S'_A-i=n+1+S'_{M^{i,j}}-i=n+1+S'_M+j=n+1+S'_{M^{j}}$, and likewise, it is equivalent to $S'_{A_{j+1}}=n+1+S'_{M^{i-1}}$.

Now, consider the first sum in the upper expression. Since the sum is taken over the $k$-tuples $A=(a_1,a_2,\ldots,a_k)$ of nonnegative integers (such that $S'_A=n+1+S'_{M^{i,j}}$), the coordinates of $A_i$ are also nonnegative with exception that its $i$-th coordinate might be $-1$ (if $a_i=0$). But, in that case, $P_{i+1}(A_i,M^j)=\binom{a_{i+1}+\cdots+a_k-m_{i+1}-\cdots-m_k-2}{-1}=0$, and so $P(A_i,M^j)=0$. Therefore, we may assume that $a_i\geq 1$, and consequently, that $A_i$ runs through the set of $k$-tuples of nonnegative integers (such that $S'_{A_{i}}=n+1+S'_{M^{j}}$). Hence,
\[\sum_{S'_A=n+1+S'_{M^{i,j}}}P(A_{i},M^j)\cdot W^{A}=w_{i}\sum_{S'_{A_{i}}=n+1+S'_{M^{j}}}P(A_{i},M^j)\cdot W^{A_i}=w_ig_{M^j}.\]

So, we are left to prove that the second sum in the upper expression for $g_{M^{i,j}}$ is equal to $w_{j+1}g_{M^{i-1}}$. Let $A=(a_1,a_2,\ldots,a_k)$ be a $k$-tuple of nonnegative integers such that $S'_A=n+1+S'_{M^{i,j}}$, i.e., $S'_{A_{j+1}}=n+1+S'_{M^{i-1}}$. It suffices to show that $a_{j+1}=0$ implies $P(A_{j+1},M^{i-1})=0$, since then the proof follows as for the first sum.

If $j+1<k$, then $a_{j+1}=0$ implies $P_{j+2}(A_{j+1},M^{i-1})=0$, and so, $P(A_{j+1},M^{i-1})=0$.

For $j=k-1$, let us assume to the contrary that $a_k=0$ and $P(A_{k},M^{i-1})\neq0$. First we shall prove that
\begin{equation}
a_{t-1}+a_{t}+\cdots+a_{k-1}\leq  m_t+\cdots+m_k+\varepsilon_{t}, \quad\mbox{ for all $t=\overline{2,k}$,}\label{induction}
\end{equation}
where $\varepsilon_{t}=\left\{\begin{array}{rl}
                1, & 2\leq  t\leq  i \\
                0, & i+1\leq  t\leq  k\end{array}\right.$.
The proof is by reverse induction on $t$. For the induction base we prove (\ref{induction}) for $t=k$. Since $\binom{a_{k-1}-1-m_k}{a_{k-1}}=P_k(A_{k},M^{i-1})\neq0$ and $a_{k-1}-1-m_k<a_{k-1}$, by Lemma \ref{lem1} we conclude that $a_{k-1}-1-m_k\leq -1$, so $a_{k-1}\leq  m_k=m_k+\varepsilon_{k}$. For the inductive step, let $2\leq  t\leq  k-1$, and suppose that $a_{t}+\cdots+a_{k-1}\leq  m_{t+1}+\cdots+m_k+\varepsilon_{t+1}$. Since obviously $\varepsilon_{t+1}\leq \varepsilon_t$, we actually have that $a_{t}+\cdots+a_{k-1}\leq  m_{t+1}+\cdots+m_k+\varepsilon_{t}$. Since
\[P_t(A_{k},M^{i-1})=\binom{a_{t-1}+a_{t}+\cdots+a_{k-1}-1-m_t- m_{t+1}-\cdots-m_k-\varepsilon_t}{a_{t-1}}\neq0,\]
and $a_{t-1}+a_{t}+\cdots+a_{k-1}-1-m_t- m_{t+1}-\cdots-m_k-\varepsilon_t\leq  a_{t-1}-1-m_t<a_{t-1}$, according to Lemma \ref{lem1}, we have that $a_{t-1}+a_{t}+\cdots+a_{k-1}-1-m_t- m_{t+1}-\cdots-m_k-\varepsilon_t\leq -1$, i.e., $a_{t-1}+a_{t}+\cdots+a_{k-1}\leq  m_t+m_{t+1}+\cdots+m_k+\varepsilon_{t}$.

Now, summing up inequalities (\ref{induction}), we get
\[S'_A\leq  S'_M+\sum_{t=2}^{k}\varepsilon_t<S'_M+k-1<S'_{M}+n+1\leq  S'_{M^{i-1}}+n+1=S'_{A_k},\]
which is a contradiction since $S'_A>S'_A-k=S'_{A_k}$.
\end{proof}

In the following lemma we establish a connection between polynomials $g_{M}$ and polynomials (dual classes) $\overline{w}_{r}\in\mathbb{Z}_{2}[w_{1},w_{2},\ldots,w_{k}]$ from the previous section.

\begin{lemma}\label{m00}
For $m\geq 0$ and $(k-1)$-tuple $M=(m,0,\ldots,0)$ we have that
\[g_M=\sum_{i=0}^m\binom{m}{i}w_1^{m-i}\overline{w}_{n+1+i}.\]
\end{lemma}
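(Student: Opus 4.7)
The plan is to verify the identity by comparing the coefficient of each monomial $W^A$ on both sides, thereby reducing the lemma to a purely combinatorial mod-$2$ binomial identity.

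First, for $M=(m,0,\ldots,0)$ one has $S'_M=m$, and direct inspection of Definition~\ref{defG} gives $P_2(A,M)=\binom{S_A-m}{a_1}$ while $P_t(A,M)=\binom{\sum_{j=t-1}^{k}a_j}{a_{t-1}}$ for $t\geq 3$. Writing
\[Q(A):=\prod_{t=3}^{k}\binom{\sum_{j=t-1}^{k}a_j}{a_{t-1}},\]
which depends only on $a_2,\ldots,a_k$, the coefficient of $W^A$ (for $S'_A=n+1+m$) in $g_M$ equals $\binom{S_A-m}{a_1}Q(A)$. On the other hand, expanding $\overline{w}_{n+1+i}$ via~(\ref{dualclass}) and observing that multiplication by $w_1^{m-i}$ merely shifts the first coordinate, the coefficient of the same $W^A$ on the right-hand side equals $\sum_{i=0}^{m}\binom{m}{i}[a_1-m+i,a_2,\ldots,a_k]$; the factor $Q(A)$ pulls out of each multinomial (since its indices $t\geq 3$ never touch the first coordinate), producing $Q(A)\sum_{i=0}^{m}\binom{m}{i}\binom{a_1-m+i+c}{a_1-m+i}$, where $c:=a_2+\cdots+a_k$.

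Cancelling $Q(A)$, the lemma is equivalent to the mod-$2$ binomial identity
\[\sum_{i=0}^{m}\binom{m}{i}\binom{a_1-m+i+c}{a_1-m+i}\equiv\binom{a_1+c-m}{a_1}\pmod{2},\]
valid for all nonnegative integers $a_1,c,m$. I would prove this by induction on $m$: the base case $m=0$ is immediate. For the inductive step, split $\binom{m}{i}=\binom{m-1}{i}+\binom{m-1}{i-1}$ via~(\ref{binomIdentity}) and divide the sum accordingly. The first subsum equals $\sum_{i=0}^{m-1}\binom{m-1}{i}\binom{(a_1-1)-(m-1)+i+c}{(a_1-1)-(m-1)+i}$ and by the inductive hypothesis (with $a_1$ replaced by $a_1-1$) is $\equiv\binom{a_1+c-m}{a_1-1}\pmod{2}$. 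Re-indexing the second subsum via $j=i-1$ yields $\sum_{j=0}^{m-1}\binom{m-1}{j}\binom{a_1-(m-1)+j+c}{a_1-(m-1)+j}$, which by the inductive hypothesis is $\equiv\binom{a_1+c-m+1}{a_1}\pmod{2}$. Adding the two contributions and invoking~(\ref{binomIdentity}) once more collapses $\binom{a_1+c-m}{a_1-1}+\binom{a_1+c-m+1}{a_1}$ modulo $2$ to the desired $\binom{a_1+c-m}{a_1}$.

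The main obstacle lies in the careful bookkeeping with the paper's extended definition of $\binom{\alpha}{\beta}$ (vanishing for $\beta<0$, polynomial value for negative $\alpha$): this is precisely what makes both the coefficient comparison and the re-indexing in the inductive step legitimate without extra boundary corrections, and what guarantees that the equality $\binom{S_A-m}{a_1}=\binom{a_1+c-m}{a_1}$ matches the target of the binomial identity in all ranges, not merely when $S_A\geq m$.
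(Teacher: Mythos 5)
Your proof is correct in substance, but it takes a genuinely different route from the paper's. You extract coefficients of each monomial $W^A$, factor out the common product $Q(A)=\prod_{t\geq 3}P_t$, and thereby reduce the lemma to the scalar mod-$2$ identity
\[
\sum_{i=0}^{m}\binom{m}{i}\binom{a_1-m+i+c}{a_1-m+i}\equiv\binom{a_1+c-m}{a_1}\pmod 2,
\]
which you close by induction on $m$ via Pascal's rule. The paper instead stays at the polynomial level: it temporarily lets $n$ vary, uses $P_2(A,M)\equiv P_2(A_1,M_1)+P_2(A,M_1)\pmod 2$ (the same Pascal step, applied before taking the sum over $A$) to derive the polynomial recursion $g^{(n)}_M=w_1g^{(n)}_{M_1}+g^{(n+1)}_{M_1}$, then substitutes the inductive hypothesis and Vandermonde-absorbs the two sums. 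The mathematical engine --- induction on $m$ plus Pascal's rule --- is the same in both, but the paper's version is shorter because it never descends to individual coefficients, at the price of having to keep track of two values of $n$; yours is more elementary and self-contained at the cost of more bookkeeping. One point you should make explicit: your inductive step applies the hypothesis at $a_1-1$, so the target identity must be asserted (and verified) for all integers $a_1$, not merely nonnegative ones. This is harmless --- for $a_1<0$ every binomial coefficient with lower index $a_1-m+i<0$ vanishes, and so does $\binom{a_1+c-m}{a_1}$, making both sides zero --- but it deserves a sentence rather than being folded into the remark about the extended binomial convention.
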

\begin{proof} The polynomials $g_M$ were introduced in Definition \ref{defG} and they depend on the (previously fixed) integer $n$. In this proof (and only in this proof) we allow $n$ to vary through the set $\{k,k+1,\ldots\}$, while the integer $k\geq 2$ is still fixed (we are working in the polynomial algebra $\mathbb{Z}_{2}[w_{1},w_{2},\ldots,w_{k}]$). Note that the polynomials $\overline{w}_{r}$, $r\geq 1$, are defined independently of $n$. We emphasize the dependence of $g_M$ on $n$ by using an appropriate superscript, and we  actually prove the following claim:
\[g^{(n)}_M=\sum_{i=0}^m\binom{m}{i}w_1^{m-i}\overline{w}_{n+1+i}, \quad\mbox{ for all } m\geq 0 \mbox{ and all } n\geq  k.\]
The proof is by induction on $m$. We have already noticed that $g_{\mathbf{0}}^{(n)}=\overline{w}_{n+1}$, and therefore, the claim is true for $m=0$ (and all $n\geq  k$). So, let $m\geq 1$ and assume that the claim is true for the integer $m-1$ and all $n\geq  k$. Let $M=(m,0,\ldots,0)$ and $n\geq  k$. Then $M_1=(m-1,0,\ldots,0)$ and since, for all $k$-tuples $A$ of integers, $P_2(A,M)\equiv P_2(A_1,M_1)+P_2(A,M_1)\pmod2$ by (\ref{binomIdentity}) and $P_t(A,M)=P_t(A_1,M_1)=P_t(A,M_1)$ for $t=\overline{3,k}$, we have that
\begin{eqnarray*}
g^{(n)}_M&=&\sum_{S'_A=n+1+S'_M}P(A,M)\cdot W^A\\
&=&\sum_{S'_A=n+1+S'_M}\left(\left(P_2(A_1,M_1)+P_2(A,M_1)\right)\cdot\prod_{t=3}^k P_t(A,M)\right)\cdot W^A\\
&=&w_1\sum_{S'_{A_1}=n+1+S'_{M_1}}P(A_1,M_1)\cdot W^{A_1}+\sum_{S'_A=(n+1)+1+S'_{M_1}}P(A,M_1)\cdot W^A\\
&=&w_1g^{(n)}_{M_1}+g^{(n+1)}_{M_1}\\
&=&w_1\sum_{i=0}^{m-1}\binom{m-1}{i}w_1^{m-1-i}\overline{w}_{n+1+i} +\sum_{i=0}^{m-1}\binom{m-1}{i}w_1^{m-1-i}\overline{w}_{n+2+i}\\
&=&\sum_{i=0}^{m}\binom{m-1}{i}w_1^{m-i}\overline{w}_{n+1+i}+\sum_{i=0}^{m}\binom{m-1}{i-1}w_1^{m-i}\overline{w}_{n+1+i} \\
&=&\sum_{i=0}^m\binom{m}{i}w_1^{m-i}\overline{w}_{n+1+i},
\end{eqnarray*}
and the proof is completed.
\end{proof}

\begin{proposition}\label{subset}
$G\subseteq I_{k,n}$.
\end{proposition}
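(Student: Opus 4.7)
I plan to prove the stronger statement that $g_M \in I_{k,n}$ for every $(k-1)$-tuple $M$ of nonnegative integers (the restriction $S_M \leq n+1$ plays no role). The overall strategy is to reduce a general $M$ to the ``one-coordinate'' case $M = (m_2, 0, \ldots, 0)$ by repeated applications of Proposition \ref{reccurence}, and then invoke Lemma \ref{m00} together with the preliminary fact that $\overline{w}_r \in I_{k,n}$ for every $r \geq n+1$ (immediate for $n+1 \leq r \leq n+k$, and an easy induction on $r$ using the recurrence (\ref{recurrence}) for larger $r$).

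The formal induction is on the pair $(P(M), m_{P(M)})$ in lexicographic order, where $P(M) := \max\{j \in \{2,\ldots,k\} : m_j > 0\}$ for $M \neq \mathbf{0}$, $P(\mathbf{0}) := 0$, and $m_0 := 0$. The base cases are $P(M) \in \{0, 2\}$: if $P(M) = 0$ then $g_M = \overline{w}_{n+1} \in I_{k,n}$, while if $P(M) = 2$ (so $M = (m_2, 0, \ldots, 0)$) Lemma \ref{m00} expresses $g_M$ as a $\mathbb{Z}_2$-linear combination of the products $w_1^{m_2 - i}\, \overline{w}_{n+1+i}$, all lying in $I_{k,n}$ by the preliminary fact.

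For the inductive step, assume $P(M) = p$ with $3 \leq p \leq k$ and set $q := m_p \geq 1$. Let $L$ be $M$ with its $p$-th coordinate decremented by $1$; then $L$ is a nonnegative tuple and $L^{p-1} = M$. Applying Proposition \ref{reccurence} to $L$ with $i = 1$ and $j = p-2$ (allowed since $1 \leq p-2 \leq k-2$) and using the conventions $L^0 = L$, $L^{0, p-1} = L^{p-1}$ yields
\[g_{L^{1, p-2}} = w_1 g_{L^{p-2}} + w_{p-1} g_L + g_M,\]
so $g_M = g_{L^{1, p-2}} + w_1 g_{L^{p-2}} + w_{p-1} g_L$ over $\mathbb{Z}_2$. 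Each of $L$, $L^{p-2}$, $L^{1, p-2}$ has measure $(P, m_P)$ strictly smaller than $(p, q)$: if $q \geq 2$ then all three still satisfy $P = p$ but have $p$-th coordinate equal to $q-1$, while if $q = 1$ then $L$ has $p$-th coordinate $0$ and the extra increments in $L^{p-2}, L^{1, p-2}$ affect only coordinates $m_{p-1}$ and $m_2$, both of index strictly less than $p$, so $P$ drops below $p$ in all three tuples. The inductive hypothesis then gives $g_M \in I_{k,n}$.

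The main obstacle is choosing an inductive measure that decreases on all three summands of the recurrence simultaneously. Natural choices like $S_M$ or $S'_M$ fail because $L^{1, p-2}$ can have larger sum than $M$. The lex order on $(P(M), m_{P(M)})$ works precisely because both extra increments in the recurrence land at indices strictly below the current $P(M)$, so one can afford to ignore the low-index coordinates entirely when measuring complexity.
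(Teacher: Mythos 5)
Your proof is correct and follows essentially the same route as the paper: prove the stronger claim that $g_M\in I_{k,n}$ for every nonnegative $M$, reduce to the single-nonzero-coordinate case via Lemma \ref{m00} and the observation that all $\overline{w}_r$ with $r\geq n+1$ lie in $I_{k,n}$, and for the inductive step apply Proposition \ref{reccurence} (with $i=1$ and $j$ one less than the index of the top nonzero coordinate) to the tuple obtained by decrementing that top coordinate. The only cosmetic difference is the choice of induction measure — you use lex order on $(P(M), m_{P(M)})$ while the paper uses the lexicographical-from-the-right well-ordering on the full tuple — but both decrease on the same three subterms of the same recurrence, so the arguments coincide.
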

\begin{proof} Since the ideal $I_{k,n}$ is generated by the polynomials $\overline{w}_{n+1},\overline{w}_{n+2},\ldots,\overline{w}_{n+k}$, note that, by the recurrence relation (\ref{recurrence}), not only these $k$ polynomials, but all $\overline{w}_{r}$ for $r\geq  n+1$ belong to $I_{k,n}$. Likewise, we shall prove that $g_M\in I_{k,n}$ for all $(k-1)$-tuples $M$ of nonnegative integers, and not only for those with the property $S_M\leq  n+1$ (i.e., $g_M\in G$).

We define the relation $<_{lexr}$ on the set of all $(k-1)$-tuples of nonnegative integers by \[(a_1,a_2,\ldots,a_{k-1})<_{lexr}(b_1,b_2,\ldots,b_{k-1})\Longleftrightarrow a_s<b_s,\mbox{ where }s=\max\{i\mid a_i\neq b_i\},\]
which is exactly the strict part of the lexicographical right ordering. This is a well ordering and our proof is by induction on $<_{lexr}$.

For the $(k-1)$-tuple $M=(m,0,\ldots,0)$, where $m\geq 0$ is arbitrary integer, from Lemma \ref{m00} and our remark at the beginning of this proof, we immediately get that $g_M\in I_{k,n}$. So, let us now take a $(k-1)$-tuple $M=(m_{2},m_{3},\ldots,m_k)$ such that the greatest integer $s$ with the property $m_{s+1}>0$ is at least $2$. Hence, $2\leq  s\leq  k-1$ and $M=(m_{2},\ldots,m_{s+1},0,\ldots,0)$. Let us also assume that $g_{M'}\in I_{k,n}$ for all $M'$ such that $M'<_{lexr} M$. We wish to prove that $g_M\in I_{k,n}$. By Proposition \ref{reccurence} applied to the $(k-1)$-tuple $M_s$, $i=1$ and $j=s-1$,
\[g_M=g_{M^{1,s-1}_s}+w_1g_{M^{s-1}_s}+w_{s}g_{M_s}.\]
Since $M_s<_{lexr} M^{s-1}_s<_{lexr} M^{1,s-1}_s<_{lexr} M$, we conclude that $g_M\in I_{k,n}$.
\end{proof}

In the following proposition we formulate a characterization of Gr\"obner bases which we shall use for the proof that the set $G$ is a Gr\"obner basis for the ideal $I_{k,n}$. The proof of the proposition can be found in \cite[Proposition 5.38(vi)]{Becker}.

\begin{proposition}\label{beck}
Let $\mathbb{F}$ be a field, $\mathbb{F}[x_1,x_2,\ldots,x_k]$ the polynomial algebra, and $I$ an ideal in $\mathbb{F}[x_1,x_2,\ldots,x_k]$. Suppose that a term ordering $\preceq $ in $\mathbb{F}[x_1,x_2,\ldots,x_k]$ is fixed. Let $G$ be a finite subset of $I$ such that $0\notin G$, and let $B\subseteq\mathbb{F}[x_1,x_2,\ldots,x_k]/I$ be the set of cosets of all terms which are not divisible by any of the leading terms $\mathrm{LT}(g)$, $g\in G$. Then $G$ is a Gr\"obner basis for $I$ with respect to $\preceq $ if and only if $B$ is a vector space basis for $\mathbb{F}[x_1,x_2,\ldots,x_k]/I$.
\end{proposition}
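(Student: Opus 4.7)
The plan is to use the multivariate division algorithm as the central tool. Recall that for any finite subset $G\subset\mathbb{F}[x_1,\ldots,x_k]\setminus\{0\}$ and any polynomial $f$, that algorithm produces a decomposition $f=\sum_{g\in G}h_gg+r$ in which no term of $r$ is divisible by any $\mathrm{LT}(g)$, $g\in G$, and $\mathrm{LT}(h_gg)\preceq\mathrm{LT}(f)$ whenever $h_gg\neq 0$. Call the terms in $\mathbb{F}[x_1,\ldots,x_k]$ not divisible by any $\mathrm{LT}(g)$, $g\in G$, the \emph{standard terms}; so $B$ consists of the cosets of standard terms modulo $I$.

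For the ($\Rightarrow$) direction, assume $G$ is a Gr\"obner basis. Spanning of $B$ is immediate from the division algorithm: every $f$ is congruent modulo $\langle G\rangle\subseteq I$ to a linear combination $r$ of standard terms. For linear independence, suppose $\sum_i c_i[t_i]=0$ in the quotient, with distinct standard terms $t_i$ and $c_i\in\mathbb{F}$, so that $p:=\sum_i c_it_i\in I$. If $p\neq 0$, then $\mathrm{LT}(p)$ is one of the $t_i$; but by the Gr\"obner basis property $\mathrm{LT}(p)$ must be divisible by some $\mathrm{LT}(g)$, contradicting the standardness of $t_i$. Hence $p=0$, all $c_i$ vanish, and $B$ is a basis.

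For ($\Leftarrow$), assume $B$ is a vector-space basis of the quotient. Take any $f\in I\setminus\{0\}$ and run the division algorithm to get $f=\sum_{g\in G}h_gg+r$, where $r$ is a combination of standard terms. Since $f\in I$ and $\sum_{g\in G}h_gg\in\langle G\rangle\subseteq I$, also $r\in I$, so $[r]=0$ in $\mathbb{F}[x_1,\ldots,x_k]/I$. Linear independence of $B$ then forces $r=0$ as a polynomial. Therefore $f=\sum_{g\in G}h_gg$, and the division-algorithm estimate $\mathrm{LT}(h_gg)\preceq\mathrm{LT}(f)$ implies that $\mathrm{LT}(f)=\mathrm{LT}(h_gg)$ for at least one $g$ (the index realizing the maximum leading term on the right), so $\mathrm{LT}(g)$ divides $\mathrm{LT}(f)$. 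This is exactly the defining property of a Gr\"obner basis.

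The only real obstacle is invoking the multivariate division algorithm together with its key estimate $\mathrm{LT}(h_gg)\preceq\mathrm{LT}(f)$; termination of the algorithm relies on $\preceq$ being a well-ordering on terms, and the estimate is precisely what powers the implication ``$f\in\langle G\rangle\Rightarrow\mathrm{LT}(g)\mid\mathrm{LT}(f)$ for some $g$.'' Once the algorithm and its property are taken as given, both directions are short and essentially formal.
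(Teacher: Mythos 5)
The paper does not prove this proposition; it is stated as a known result and the reader is referred to Becker and Weispfenning, Proposition 5.38(vi). So there is no internal proof to compare against, and the only question is whether your argument is correct.

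It is. Your proof is the standard textbook argument via the multivariate division algorithm, and both directions are sound. In the forward direction, spanning follows from the remainder of division being a linear combination of standard terms, and independence follows because a nonzero $p=\sum_i c_it_i\in I$ would have $\mathrm{LT}(p)$ equal to a standard term yet divisible by some $\mathrm{LT}(g)$, a contradiction. In the reverse direction, for $f\in I\setminus\{0\}$ the remainder $r$ lies in $I$ and is a combination of standard terms, so independence of $B$ forces $r=0$; then $f=\sum_{g}h_gg$ with $\mathrm{LT}(h_gg)\preceq\mathrm{LT}(f)$, and since $\mathrm{LT}(f)$ must appear on the right-hand side, some $g$ realizes $\mathrm{LT}(h_gg)=\mathrm{LT}(f)$, so $\mathrm{LT}(g)$ divides $\mathrm{LT}(f)$. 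One small point worth making explicit: the conclusion of the reverse direction is that every $f\in I\setminus\{0\}$ has $\mathrm{LT}(f)$ divisible by some $\mathrm{LT}(g)$; together with $G\subseteq I$ this also yields $\langle G\rangle=I$ (again by running division on any $f\in I$ and observing $r=0$), so $G$ is a Gr\"obner basis for $I$ in the full sense, not merely a Gr\"obner basis of $\langle G\rangle$. You implicitly handle this, but it deserves a sentence. Otherwise the proof is complete and supplies a self-contained argument where the paper only gives a citation.
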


We are now finally in position to prove our main result.

\begin{theorem}\label{main}
The set $G$ (see Definition \ref{defG}) is the reduced Gr\"obner basis for the ideal $I_{k,n}$ with respect to the grlex ordering $\preceq $.
\end{theorem}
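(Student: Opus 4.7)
The plan is to invoke the characterization given by Proposition \ref{beck}, so essentially everything has already been prepared. Take $\mathbb{F}=\mathbb{Z}_2$, $I=I_{k,n}$, and the grlex ordering $\preceq$. The set $G$ is clearly finite (indexed by $(k-1)$-tuples $M$ of nonnegative integers with $S_M\leq n+1$), it is contained in $I_{k,n}$ by Proposition \ref{subset}, and $0\notin G$ by Proposition \ref{degree}. So the only thing to verify is that the cosets of terms not divisible by any $\mathrm{LT}(g_M)$ form a $\mathbb{Z}_2$-basis of $\mathbb{Z}_2[w_1,\ldots,w_k]/I_{k,n}$.

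The first key step is to identify the set of leading terms. By Proposition \ref{degree}, $\mathrm{LT}(g_M)=W^{\overline{M}}$ with $\overline{M}=(n+1-S_M,m_2,\ldots,m_k)$, and $S_{\overline{M}}=n+1$. Conversely, every $k$-tuple $C=(c_1,\ldots,c_k)$ of nonnegative integers with $S_C=n+1$ is of the form $\overline{M}$ for $M=(c_2,\ldots,c_k)$ (with $S_M=n+1-c_1\leq n+1$). Hence
\[\{\mathrm{LT}(g_M)\mid g_M\in G\}=\{W^C\mid S_C=n+1\}.\]
Now a term $W^A$ is divisible by some $W^C$ with $S_C=n+1$ iff $S_A\geq n+1$ (if $S_A\geq n+1$, reduce coordinates of $A$ until the sum equals $n+1$ to produce a suitable $C$; if $S_A\leq n$, no divisor of total degree $n+1$ exists). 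Therefore the set of cosets described in Proposition \ref{beck} is exactly $\{W^A+I_{k,n}\mid S_A\leq n\}$, which is the Jaworowski basis from Theorem \ref{Jaworowski}. Proposition \ref{beck} then gives that $G$ is a Gr\"obner basis for $I_{k,n}$.

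For reducedness, the leading coefficients are $1$ automatically (we work over $\mathbb{Z}_2$), so it remains to check that for every $g_M\in G$, no term in $T(g_M)$ other than $\mathrm{LT}(g_M)$ is divisible by the leading term of any element of $G$. But the second assertion of Proposition \ref{degree} says that every non-leading term $W^A\in T(g_M)\setminus\{W^{\overline{M}}\}$ satisfies $S_A<n+1$, whereas every leading term $\mathrm{LT}(g_{M'})$ has total degree exactly $n+1$; thus no such $W^A$ can be divisible by any $\mathrm{LT}(g_{M'})$. This finishes the proof.

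There is no serious obstacle here once the preceding propositions are in place; the argument is essentially a bookkeeping step that matches the leading terms of $G$ against the complement of the Jaworowski basis. The real work was done in Proposition \ref{degree} (identification of $\mathrm{LT}(g_M)$ and control of the other terms) and Proposition \ref{subset} (containment $G\subseteq I_{k,n}$), together with invoking the additive basis from Theorem \ref{Jaworowski}.
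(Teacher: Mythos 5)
Your proof is correct and follows essentially the same route as the paper's: invoke Proposition \ref{beck} with the leading-term identification from Proposition \ref{degree}, match the resulting set of undivided terms against Jaworowski's additive basis (Theorem \ref{Jaworowski}), and then use the degree bound on non-leading terms from Proposition \ref{degree} to conclude reducedness. No meaningful differences.
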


\begin{proof} By Propositions \ref{degree} and \ref{subset}, $0\notin G\subseteq I_{k,n}$, and it is obvious from the definition that $G$ is finite. Also, according to Proposition \ref{degree} again, the set $\{\mathrm{LT}(g)\mid g\in G\}$ is exactly the set of all terms in $\mathbb{Z}_2[w_1,w_2,\ldots,w_k]$ with the sum of the exponents equal to $n+1$, that is $\{\mathrm{LT}(g)\mid g\in G\}=\{W^A\mid S_A=n+1\}$. Therefore, the set of all terms which are not divisible by any of the terms in $\{\mathrm{LT}(g)\mid g\in G\}$ is just the set $\{W^A\mid S_A\leq  n\}$. By Proposition \ref{beck} and Theorem \ref{Jaworowski}, $G$ is a Gr\"obner basis for $I_{k,n}$.

Since $\{\mathrm{LT}(g)\mid g\in G\}=\{W^A\mid S_A=n+1\}$ and all terms of $g\in G$ except the leading one have the sum of the exponents at most $n$ (Proposition \ref{degree}), no term of $g$ is divisible by any other leading term in $G$. This means that Gr\"obner basis $G$ is the reduced one.
\end{proof}

Propositions \ref{degree} and \ref{reccurence} enable us to explicitly determine the polynomials $g_M\in G$ for the $(k-1)$-tuples $M=(m_2,m_3,\ldots,m_k)$ such that $m_k$ is close to $n$. Namely, if $g_M\in G$ and $W^A\in T(g_M)\setminus\{W^{\overline{M}}\}$ (where $\overline{M}=(n+1-S_M,m_2,\ldots,m_k)$), then $S_A\leq  n$ by Proposition \ref{degree}. Consequently, $S'_A=\sum_{j=1}^kja_j\leq  k\sum_{j=1}^ka_j=kS_A\leq  kn$. On the other hand, $S'_A=n+1+S'_M$, and so, we conclude that $g_M=W^{\overline{M}}$ whenever $S'_M>(k-1)n-1$.

Let $N$ be the $(k-1)$-tuple $(0,\ldots,0,n)$. Since $S'_{N^s}>S'_N=(k-1)n$ (for $s=\overline{1,k-1}$), by the previous remark we have that
\begin{equation}\label{00n}
g_{N}=w_1 w_k^{n}\quad \mbox{ and }\quad
g_{N^{s}}=w_{s+1}w_k^n,\quad 1\leq  s\leq  k-1.
\end{equation}
If we apply Proposition \ref{reccurence} to the $(k-1)$-tuple $N_{k-1}=(0,\ldots,0,n-1)$, $i=1$ and $j=k-1$, we obtain the relation $w_kg_{N_{k-1}}=g_{N^{1}}+w_1g_{N}$. Both summands on the right-hand side contain $w_k$ as a factor, so $w_k$ cancels out and using (\ref{00n}) we get
\begin{equation}\label{00n-1}
g_{N_{k-1}}=w_1^2 w_k^{n-1}+w_2 w_k^{n-1}.
\end{equation}
Likewise, by applying Proposition \ref{reccurence} to $N_{k-1}$, $i=s+1$ and $j=k-1$, one obtains that $w_kg_{N^s_{k-1}}=w_{s+1}g_{N}+g_{N^{s+1}}$, and so
\begin{equation}\label{010n-1}
g_{N^s_{k-1}}=w_1w_{s+1} w_k^{n-1}+w_{s+2}w_{k}^{n-1},\quad 1\leq  s\leq  k-2.
\end{equation}
Identities (\ref{00n-1}) and (\ref{010n-1}) determine $g_M\in G$ when $m_k=n-1$ and $S_M\leq  n$. For computing $g_M\in G$ when $m_k=n-1$ and $S_M=n+1$ for a concrete integer $k$, one can use Proposition \ref{reccurence} and apply it first to $N_{k-1}$, $i=1$ and all $j=\overline{1,k-2}$, then to $N_{k-1}$, $i=2$ and all $j=\overline{2,k-2}$ and so on. After that, the polynomials $g_M\in G$ for $m_k=n-2$ can be obtained in the same manner -- by suitable applications of Proposition \ref{reccurence}. Actually, in the cases $k=2$ and $k=3$, all the members $g_M$ of Gr\"obner basis $G$ for $m_k\geq  n-5$ were listed in \cite{Petrovic} (for $k=2$) and \cite{Petrovic1} (for $k=3$).

Since our application of Gr\"obner bases (given in the next section) treats the case $k=5$, let us write down the relations (\ref{00n-1}) and (\ref{010n-1}) in this case:
\begin{equation}\label{k=5}
\begin{array}{rcl}
                g_{(0,0,0,n-1)}&=&w_1^2w_5^{n-1}+w_2w_5^{n-1}, \\
                g_{(1,0,0,n-1)}&=&w_1w_2w_5^{n-1}+w_3w_5^{n-1}, \\
                g_{(0,1,0,n-1)}&=&w_1w_3w_5^{n-1}+w_4w_5^{n-1}, \\
                g_{(0,0,1,n-1)}&=&w_1w_4w_5^{n-1}+w_5^{n}. \\
\end{array}
\end{equation}

\begin{remark}
Since the description of the mod $2$ cohomology of the complex Grassmann manifolds $G_k(\mathbb{C}^{n+k})$ is essentially the same as the one in the real case (the only difference being in the fact that dimensions of the Stiefel-Whitney classes are multiplied by $2$), it is immediate that the reduced Gr\"obner basis for the corresponding ideal in this case can be obtained from the Gr\"obner basis $G$ (Definition \ref{defG}) by substituting $w_{2i}$ for $w_i$ ($i=\overline{1,k}$) in all polynomials $g_M\in G$.
\end{remark}

\section{Application to immersions}

In this section we consider the (real) Grassmannians $G_{5,n}$, where $n$ is divisible by $8$. As before, $w_i\in H^{i}(G_{5,n};\mathbb{Z}_2)$, $i=\overline{1,5}$, is the $i$-th Stiefel-Whitney class of the canonical bundle $\gamma_5$ over $G_{5,n}$.

\begin{lemma}\label{immlem}
Let $n\equiv0\pmod8$ and let $\nu$ be the stable normal bundle over Grassmann manifold $G_{5,n}$. Then for the Stiefel-Whitney classes of this bundle, the following equalities hold: $w_2(\nu)=w_{1}^{2}+w_2$ and $w_i(\nu)=0$ when $i\geq 5n-4$.
\end{lemma}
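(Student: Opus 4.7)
The plan is to derive a closed form for $w(\nu)$ from the standard description of $\tau(G_{5,n})$, and then extract the two required identities by a degree analysis combined with the Gr\"obner basis of Section~3.

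First, starting from the identification $\tau(G_{5,n})\cong\gamma_{5}^{*}\otimes\gamma_{5}^{\perp}$, where $\gamma_{5}^{\perp}$ is the orthogonal complement of $\gamma_{5}$ so that $\gamma_{5}\oplus\gamma_{5}^{\perp}$ is trivial of rank $n+5$, tensoring with $\gamma_{5}^{*}$ and using $\gamma_{5}^{*}\cong\gamma_{5}$ for real bundles gives the stable splitting $\tau\oplus\gamma_{5}\otimes\gamma_{5}\cong(n+5)\gamma_{5}$, hence the total-class identity
\[
w(\tau)\cdot w(\gamma_{5}\otimes\gamma_{5})=w^{n+5},\qquad w:=w(\gamma_{5}).
\]
In characteristic two, the splitting principle gives $w(\gamma_{5}\otimes\gamma_{5})=\theta^{2}$ with $\theta:=w(\Lambda^{2}\gamma_{5})=\prod_{i<j}(1+\alpha_{i}+\alpha_{j})$: diagonal pairs collapse to $1$, and each off-diagonal unordered pair contributes a squared factor. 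Therefore
\[
w(\nu)=\theta^{2}\cdot\bar w^{\,n+5},\qquad \bar w:=w(\gamma_{5}^{\perp})=w^{-1}.
\]

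To establish $w_{2}(\nu)=w_{1}^{2}+w_{2}$, I would extract the degree-$2$ component. Since $\theta^{2}$ has only even-degree components and $(\theta^{2})_{2}=\theta_{1}^{2}=((5-1)w_{1})^{2}\equiv 0\pmod 2$, the only surviving contribution is $(\bar w^{\,n+5})_{2}=\binom{n+5}{2}\bar w_{1}^{2}+(n+5)\bar w_{2}$. For $n\equiv 0\pmod 8$, $n+5$ is odd and $\binom{n+5}{2}$ is even, so $w_{2}(\nu)=\bar w_{2}=w_{1}^{2}+w_{2}$ (the last equality is the degree-$2$ coefficient of $w\bar w=1$).

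For the vanishing $w_{i}(\nu)=0$ in degrees $i\geq 5n-4$, the strategy is to exploit the mod-$2$ Frobenius identity $\bar w^{2^{a}}=1+\bar w_{1}^{2^{a}}+\bar w_{2}^{2^{a}}+\cdots$. Writing $n=8m$ gives
\[
\bar w^{\,n+5}=(\bar w^{8})^{m}\cdot\bar w^{5},
\]
where $(\bar w^{8})^{m}$ contributes only in degrees divisible by $8$. Combined with $\theta^{2}$ being supported in even degrees at most $20$, this restricts the monomials that can appear in $(\theta^{2}\bar w^{\,n+5})_{i}$ for $i\in\{5n-4,\ldots,5n\}$ to a short list of products built from $\bar w_{j}^{8}$-type and $\bar w^{5}$-type factors. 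Each such contribution is then to be reduced modulo $I_{5,n}$ using the Gr\"obner basis elements from Section~3, most importantly the four identities $(\ref{k=5})$, which say that $w_{5}^{\,n-1}(w_{1}^{2}+w_{2})$, $w_{5}^{\,n-1}(w_{1}w_{2}+w_{3})$, $w_{5}^{\,n-1}(w_{1}w_{3}+w_{4})$, and $w_{1}w_{4}w_{5}^{\,n-1}+w_{5}^{n}$ all lie in $I_{5,n}$, providing the cancellations in exactly the top five degrees.

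The hard part will be the bookkeeping in this last step: for each $i\in\{5n-4,\ldots,5n\}$ one has to enumerate the monomial products surviving in $(\theta^{2}\bar w^{\,n+5})_{i}$ after the Frobenius decomposition, and verify term-by-term that each reduces to zero modulo $I_{5,n}$ via $(\ref{k=5})$. The hypothesis $n\equiv 0\pmod 8$ is essential here: it is precisely what makes the Frobenius decomposition of $\bar w^{\,n+5}$ align with the degree-$\le 20$ support of $\theta^{2}$ in a way that the relations $(\ref{k=5})$ suffice.
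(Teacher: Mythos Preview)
Your computation of $w_{2}(\nu)$ is correct and essentially matches the paper's: both rest on the observation that $w(\gamma_{5}\otimes\gamma_{5})=\theta^{2}$ contributes nothing in degrees $1$ and $2$, after which a parity check on the binomial coefficients in $\bar w^{\,n+5}$ (equivalently, in the paper's $w^{\,2^{r+1}-n-5}$) finishes.

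The vanishing half, however, has a genuine gap. You propose to expand $\theta^{2}\bar w^{\,n+5}$ via $\bar w^{\,n+5}=(\bar w^{8})^{m}\bar w^{5}$ and then reduce each degree-$i$ piece ($5n-4\le i\le 5n$) modulo $I_{5,n}$ using the four relations~(\ref{k=5}). But $(\bar w^{8})^{m}$ already has nonzero components in every degree divisible by $8$ up to $n^{2}$, so for each such $i$ the set of triples $(a,b,c)$ with $a+b+c=i$, $a\le 20$ even, $8\mid b$, is not short at all---its size grows linearly in $n$. There is no reason the four relations~(\ref{k=5}), which live only in degrees $5n-3,5n-2,5n-1,5n$, would suffice to annihilate all these contributions (in particular they say nothing directly about degree $5n-4$), and you do not carry out a single one of the reductions. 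As written this is a plan, not a proof, and the assertion that the surviving list is ``short'' is unsubstantiated.

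The paper sidesteps the whole issue with a one-line degree count, citing Hiller--Stong: choosing $r$ with $2^{r}<n+5\le 2^{r+1}$, one has in $H^{*}(G_{5,n};\mathbb Z_{2})$ the identity
\[
w(\nu)=w(\gamma_{5}\otimes\gamma_{5})\cdot w^{\,2^{r+1}-n-5},
\]
whose right-hand side is an honest polynomial of degree at most $20+5(2^{r+1}-n-5)$. Since $n\equiv 0\pmod 8$ forces $n\ge 2^{r}$, this bound is at most $5\cdot 2^{r}-5\le 5n-5$, and the vanishing for $i\ge 5n-4$ is immediate---no Gr\"obner reduction enters this lemma at all. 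The idea you are missing is precisely this replacement of the \emph{negative} power $w^{-(n+5)}=\bar w^{\,n+5}$ by the \emph{positive} power $w^{\,2^{r+1}-n-5}$, which turns an unbounded reduction problem into a trivial degree estimate.
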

\begin{proof} Let $r\geq 3$ be the integer such that $2^r<n+5\leq 2^{r+1}$. Note that this implies $n\geq 2^{r}$ since $n\equiv0\pmod8$.
In \cite[p.\ 365]{Hiller} Hiller and Stong proved that
\begin{equation}\label{totsw}
w(\nu)=w(\gamma_5\otimes\gamma_5)\cdot(1+w_1+w_2+w_3+w_4+w_5)^{2^{r+1}-n-5},
\end{equation}
and that the top nonzero class in this expression is in dimension $20+5(2^{r+1}-n-5)$. Since $n\geq 2^{r}$, we have that $20+5(2^{r+1}-n-5)\leq 20+5(2^{r}-5)=5\cdot2^{r}-5\leq 5n-5$. This proves the second equality in the statement of the lemma.

For the first one, we need the fact $w_1(\gamma_5\otimes\gamma_5)=w_2(\gamma_5\otimes\gamma_5)=0$, which is not hard to check by the method described in \cite[Problem 7-C]{MilnorSt}. Using this fact and (\ref{totsw}), one obtains that
\[w_2(\nu)={2^{r+1}-n-5\choose2}w_{1}^{2}+(2^{r+1}-n-5)w_2=w_{1}^{2}+w_2,\]
since $2^{r+1}-n-5\equiv3\pmod8$.
\end{proof}

\begin{theorem}\label{immersion}
If $n\equiv0\pmod8$, then $G_{5,n}$ immerses into $\mathbb{R}^{10n-3}$.
\end{theorem}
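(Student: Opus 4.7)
The plan is to invoke the Hirsch immersion criterion: $G_{5,n}$, a closed manifold of dimension $5n$, immerses in $\mathbb{R}^{10n-3}$ if and only if its stable normal bundle $\nu$ admits a reduction of structure group to $O(5n-3)$, equivalently, has geometric dimension at most $5n-3$. The task therefore becomes one of obstruction theory, namely showing that the classifying map $\nu : G_{5,n} \to BO$ factors up to homotopy through $BO(5n-3)$. The obstructions to such a factorisation form a tower of mod-$2$ cohomology classes on $G_{5,n}$ concentrated in degrees $5n-2,\, 5n-1,\, 5n$, the lowest of which (the primary obstruction) is $w_{5n-2}(\nu)$.

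Lemma \ref{immlem} immediately kills the entire primary layer: $w_i(\nu)=0$ for every $i \geq 5n-4$, so in particular $w_{5n-2}(\nu) = w_{5n-1}(\nu) = w_{5n}(\nu) = 0$. Because the codimension $c = 5n-3$ satisfies $c > 5n/2$ for $n \geq 2$, we are in the metastable range, and the only remaining obstructions are a short list of secondary (and possibly tertiary) classes given as polynomial expressions in $w_1,\ldots,w_5$ modulo $I_{5,n}$. These expressions are built from Steenrod operations applied to the low-degree Stiefel-Whitney classes of $\nu$, for which the explicit identity $w_2(\nu) = w_1^2 + w_2$ from Lemma \ref{immlem} — together with analogous identities obtained from (\ref{totsw}) in exactly the same manner — furnishes the basic input.

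The main obstacle is the concrete evaluation of these secondary obstruction polynomials in $\mathbb{Z}_2[w_1,\ldots,w_5]/I_{5,n}$, and this is precisely where Theorem \ref{main} enters. The reduced Gr\"obner basis $G$ lets us rewrite any polynomial in the Stiefel-Whitney classes as its unique normal form supported on Jaworowski's monomial basis, making each candidate vanishing a decidable finite computation. Concretely, the explicit formulae (\ref{k=5}) for the basis elements $g_M$ with $m_5 \in \{n-1,\,n\}$, extended downwards via Proposition \ref{reccurence}, supply the ingredients needed to run these reductions. The arithmetic hypothesis $n \equiv 0 \pmod{8}$ is expected to enter through the parities of the binomial coefficients that arise during these reductions (as it already did in the proof of Lemma \ref{immlem}). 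Once each surviving obstruction is shown to reduce to zero, Hirsch's criterion delivers the desired immersion of $G_{5,n}$ into $\mathbb{R}^{10n-3}$.
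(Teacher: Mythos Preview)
Your outline correctly sets up Hirsch's criterion and the role of Lemma \ref{immlem}, but the final paragraph misidentifies what has to be computed and why. The secondary and tertiary obstructions in a modified Postnikov tower are not well-defined cohomology classes that one ``shows reduce to zero''; they are cosets modulo an indeterminacy subgroup, and the argument in the paper proceeds by proving that this indeterminacy is \emph{all} of the relevant cohomology group. Concretely, the paper uses the Gitler--Mahowald/Nussbaum $5n$-MPT for $BO(5n-3)\to BO$, whose $k$-invariants are governed by the relations $(Sq^2+w_2)k_1^0=0$, $(Sq^2+w_1^2+w_2)Sq^1k_1^0+Sq^1k_2^0=0$, and $(Sq^2+w_2)k_1^1+Sq^1k_2^1=0$. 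After Lemma \ref{immlem} kills $k_1^0=w_{5n-2}(\nu)$ and $k_2^0=w_{5n}(\nu)$, the remaining obstructions live in $H^{5n-1}\cong\mathbb{Z}_2$ and $H^{5n}\cong\mathbb{Z}_2$, and are overcome by showing that the operations appearing in those relations are \emph{surjective}: one checks $Sq^1(w_4w_5^{n-1})=w_5^n\neq0$ for the degree-$5n$ obstructions, and $(Sq^2+w_2(\nu))(w_2w_5^{n-1})=w_4w_5^{n-1}\neq0$ for $k_1^1$.

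Thus the Gr\"obner basis elements in (\ref{k=5}) are used not to reduce an obstruction polynomial to zero, but to verify that $(Sq^2+w_1^2+w_2)(w_2w_5^{n-1})$ is \emph{nonzero} in $H^{5n-1}(G_{5,n};\mathbb{Z}_2)$: after expanding via the Wu and Cartan formulas (this is where $n\equiv0\pmod8$ enters, through the parities of $n-1$ and $\binom{n-1}{2}$), one subtracts $w_2g_{(0,0,0,n-1)}$ and $g_{(0,1,0,n-1)}$ to leave the basis monomial $w_4w_5^{n-1}$. Your proposal, as written, would have you trying to prove the opposite kind of statement, and it omits the specific MPT input (the table of $k$-invariants) without which the obstruction-theoretic step is not well posed.
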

\begin{proof} Since $\dim G_{5,n}=5n$, in order to prove that there is an immersion of $G_{5,n}$ into $\mathbb{R}^{10n-3}$, by Hirsch's theorem (\cite[Theorem 6.4]{Hirsch}) it suffices to show that the classifying map $f_{\nu}:G_{5,n}\rightarrow BO$ of the stable normal bundle $\nu$ over $G_{5,n}$ lifts up to $BO(5n-3)$ .
\[\bfig
\morphism<600,0>[G_{5,n}`BO;f_{\nu}]
\morphism(600,500)|r|<0,-500>[BO(5n-3)`BO;p]
\morphism/-->/<600,500>[G_{5,n}`BO(5n-3);]
\efig\]

We shall use the method of modified Postnikov towers (MPT) introduced by Gitler and Mahowald in \cite{Gitler} and extended to fibrations $p:BO(m)\rightarrow BO$ for $m$ odd by Nussbaum in \cite{Nussbaum}. So, we factor out the map $p:BO(5n-3)\rightarrow BO$ as indicated in the following diagram and then we lift the map $f_{\nu}$ one level at the time. The diagram presents the $5n$-MPT for the fibration $p$ ($K_{m}$ stands for the Eilenberg-MacLane space $K(\mathbb{Z}_{2},m)$). Also, the relations that determine the $k$-invariants of the tower are listed in the table.
\[\bfig
 \morphism<800,0>[G_{5,n}`BO;f_{\nu}]
 \morphism(800,0)<900,0>[BO`K_{5n-2}\times K_{5n};k_{1}^{0}\times k_{2}^{0}]
 \morphism(800,400)<0,-400>[E_{1}`BO;]
 \morphism(800,400)<900,0>[E_{1}`K_{5n-1}\times K_{5n};k_{1}^{1}\times k_{2}^{1}]
 \morphism(800,800)<0,-400>[E_{2}`E_{1};]
 \morphism(800,800)<700,0>[E_{2}`K_{5n};k_{1}^{2}]
 \morphism(800,1200)<0,-400>[BO(5n-3)`E_{2};]
 \morphism/-->/<800,400>[G_{5,n}`E_{1};]
 \morphism/-->/<800,800>[G_{5,n}`E_{2};]
 \morphism/-->/<800,1200>[G_{5,n}`BO(5n-3);]
 \efig\]
\begin{table}[ht]
\renewcommand\arraystretch{1.5}
\noindent\[
\begin{array}{|l|}
\hline
k_{1}^{0}=w_{5n-2}\\
\hline
k_{2}^{0}=w_{5n}\\
\hline
k_{1}^{1}: (Sq^{2}+w_{2})k_{1}^{0}=0\\
\hline
k_{2}^{1}: (Sq^{2}+w_{1}^{2}+w_{2})Sq^{1}k_{1}^{0}+Sq^{1}k_{2}^{0}=0\\
\hline
k_{1}^{2}: (Sq^{2}+w_{2})k_{1}^{1}+Sq^{1}k_{2}^{1}=0\\
\hline
\end{array}
\]
\end{table}

According to Lemma \ref{immlem}, $w_{5n-2}(\nu)=w_{5n}(\nu)=0$, so, we can lift $f_{\nu}$ up to $E_1$. Also, since \[Sq^1(w_4w_5^{n-1})=(w_1w_4+w_5)w_5^{n-1}+w_4(n-1)w_1w_5^{n-1}=nw_1w_4w_5^{n-1}+w_5^{n}=w_5^{n},\]
and since $w_5^n\neq0$ in $H^{5n}(G_{5,n};\mathbb{Z}_2)\cong\mathbb{Z}_2$ (Theorem \ref{Jaworowski}), by looking at the relations in the table for $k_{2}^{1}$ and $k_{1}^{2}$, we see that it is easy to overcome these two $k$-invariants. Therefore, the only obstruction left to deal with comes from the $k$-invariant $k_{1}^{1}$. Since $H^{5n-1}(G_{5,n};\mathbb{Z}_2)\cong\mathbb{Z}_2$, it suffices to show that the map $\left(Sq^{2}+w_{2}(\nu)\right):H^{5n-3}(G_{5,n};\mathbb{Z}_2)\rightarrow H^{5n-1}(G_{5,n};\mathbb{Z}_2)$ is nontrivial. We use Lemma \ref{immlem}, formulas of Wu and Cartan and the polynomials from (\ref{k=5}) (which are trivial in $H^{*}(G_{5,n};\mathbb{Z}_2)$ since they are members of the Gr\"obner basis $G$ for the ideal $I_{5,n}$, and hence, they belong to $I_{5,n}$) to calculate
\begin{align*}
\left(Sq^{2}+w_{2}(\nu)\right)&(w_2w_5^{n-1})=(Sq^{2}+w_1^2+w_2)(w_2w_5^{n-1})\\
&=w_2^2w_5^{n-1}+(w_1w_2+w_3)(n-1)w_1w_5^{n-1}\\
&+w_2\left((n-1)w_2w_5^{n-1}+{n-1\choose2}w_1^2w_5^{n-1}\right)+w_1^2w_2w_5^{n-1}+w_2^2w_5^{n-1}\\
&=w_1^2w_2w_5^{n-1}+w_1w_3w_5^{n-1}+w_2^2w_5^{n-1}\\
&=w_2g_{(0,0,0,n-1)}+g_{(0,1,0,n-1)}+w_4w_5^{n-1}\\
&=w_4w_5^{n-1},
\end{align*}
and this class is nonzero by Theorem \ref{Jaworowski}.
\end{proof}

By the famous result of Cohen (\cite{Cohen}), Grassmanian $G_{5,n}$ can be immersed into $\mathbb{R}^{10n-\alpha(5n)}$, where $\alpha(5n)$ denotes the number of ones in the binary expansion of $5n$. This means that Theorem \ref{immersion} improves this result whenever $\alpha(5n)=2$ (and $n\equiv0\pmod8$). Such a case occurs when $n$ is a power of two, and it is known that then $G_{5,n}$ cannot be immersed into $\mathbb{R}^{10n-6}$ (\cite[p.\ 365]{Hiller}). So, if $n$ is a power of two, then for $\mathrm{imm}(G_{5,n})=\min\{d\mid G_{5,n}\mbox{ immerses into }\mathbb{R}^d\}$ the following inequalities hold
\[10n-5\leq \mathrm{imm}(G_{5,n})\leq 10n-3.\]
Actually, a sufficient and necessary condition for $\alpha(5n)=2$ and $n\equiv0\pmod8$ is that $n$ is of the form $2^r+\sum_{i=0}^s(2^{r+2+4i}+2^{r+3+4i})$, $r\geq 3$, $s\geq -1$ (where the case $s=-1$ corresponds to the case $n=2^r$).

\bibliographystyle{amsplain}

\begin{thebibliography}{99}

%
\bibitem{Becker}
{ T. Becker  and V. Weispfenning}, {\em Gr\"{o}bner Bases: a Computational Approach to Commutative
Algebra}, Graduate Texts in Mathematics, Springer-Verlag, New York 1993.
%
\bibitem{Borel}
{ A. Borel}, {\em La cohomologie mod 2 de certains espaces homogenes}, Comm.\ Math.\ Helv.\ {\bf 27} (1953) 165--197.
%
\bibitem{Cohen}
{ R. Cohen}, {\em The immersion conjecture for differentiable manifolds}, Ann.\ of Math.\ {\bf 22} (1985) 237--328.
%
\bibitem{Fukaya}
{ T. Fukaya}, {\em Gr\"obner bases of oriented Grassmann manifolds}, Homology Homotopy Appl.\ {\bf10(2)} (2008) 195--209.
%
\bibitem{Gitler}
{ S. Gitler and M. Mahowald}, {\em The geometric dimension of real stable vector bundles}, Bol.\ Soc.\ Mat.\
Mex.\ {\bf11} (1966) 85--107.
%
\bibitem{Hiller}
{ H. Hiller and R. E. Stong}, {\em Immersion dimension for real Grassmannians}, Math.\ Ann.\ {\bf 255} (1981) 361--367.
%
\bibitem{Hirsch}
{ M. W. Hirsch}, {\em Immersion of manifolds}, Trans.\ Amer.\ Math.\ Soc.\ {\bf 93} (1959) 242--276.
%
%
\bibitem{Jaworowski}
{ J. Jaworowski}, {\em An additive basis for the cohomology of real Grassmannians}, Lecture Notes in Math.\ {\bf1474}, Springer-Verlag, Berlin (1991) 231--234.
%
\bibitem{Monks}
{ K. Monks}, {\em Groebner bases and the cohomology of Grassmann manifolds with application to immersion},{ Bol.\ Soc.\ Mat.\ Mex.\ }{\bf7} (2001) 123--136.
%
\bibitem{MilnorSt}
{ J. W. Milnor, J. D. Stasheff}, {\em Characteristic Classes}, Ann.\ of Math.\ Studies \textbf{76}, Princeton University Press, New Jersey 1974.
%
%
\bibitem{Nussbaum}
{ F. Nussbaum}, {\em Obstruction theory of possibly nonorientable fibrations}, Northwestern Unversity
PhD thesis (1970).
%
%
\bibitem{Petrovic}
{ Z. Z. Petrovi\'c and B. I. Prvulovi\'c}, {\em On Groebner bases and immersions of Grassmann manifolds $G_{2,n}$}, Homology Homotopy Appl.\ {\bf13(2)} (2011) 113--128.
%
\bibitem{Petrovic1}
{ Z. Z. Petrovi\'c and B. I. Prvulovi\'c}, {\em Groebner bases and some immersion theorems for Grassmann manifolds $G_{3,n}$}, submitted to{ Proceed. Edinburgh Math. Soc.} (2011).
%

\end{thebibliography}

\end{document}